\numberwithin{equation}{section}
\theoremstyle{plain}
\newtheorem{thm}{Theorem}[section]
\newtheorem{cor}[thm]{Corollary}
\newtheorem{prop}[thm]{Proposition}
\theoremstyle{definition}
\newtheorem{example}[thm]{Example}
\newtheorem*{thm*}{Theorem}
\newtheorem*{lem*}{Lemma}
\newtheorem*{example*}{Example}
\newcommand{\C}{\mathbb{C}}
\newcommand{\R}{\mathbb{R}}
\newcommand{\tr}{\textrm{tr}}
\newcommand{\vecc}{\textrm{vec}}
\newcommand{\diag}{\textrm{diag}}
\def\dd{\; {\rm{d}}}
 \newcommand\indep{\protect\mathpalette{\protect\independenT}{\perp}}
    \def\independenT#1#2{\mathrel{\rlap{$#1#2$}\mkern2mu{#1#2}}}
\newcommand{\Cof}{{\rm{Cof}\hskip 0.75pt}}
\begin{document}

\begin{frontmatter}
\title{Exact formulas for the normalizing constants of Wishart distributions for graphical models}
\runtitle{Normalizing constants for $G$-Wishart distributions}

\begin{aug}
\author{\fnms{Caroline} \snm{Uhler}\thanksref{t1}\ead[label=e1]{cuhler@mit.edu}},
\author{\fnms{Alex} \snm{Lenkoski}\thanksref{t2}\ead[label=e2]{alex@nr.no}},
\and
\author{\fnms{Donald} \snm{Richards}\thanksref{t3}\ead[label=e3]{richards@stat.psu.edu}}

\runauthor{C. Uhler, A. Lenkoski, D. Richards}

\affiliation{Massachusetts Institute of Technology\thanksmark{t1}, 
Norwegian Computing Center\thanksmark{t2}, 
and Penn State University\thanksmark{t3}}

\address{C.~Uhler\\
Laboratory for Information and Decision Systems\\
and Institute for Data, Systems and Society\\
Massachusetts Institute of Technology\\Cambridge, MA 02139, U.S.A.\\
\printead{e1}}

\address{A.~Lenkoski\\
Norwegian Computing Center \\
Oslo, Norway \\
\printead{e2}}

\address{D.~Richards\\
Department of Statistics \\
Penn State University\\
University Park, PA 16802, U.S.A. \\
\printead{e3}}
\end{aug}

\begin{abstract}
Gaussian graphical models have received considerable attention during the past four decades from the statistical and machine learning communities.  In Bayesian treatments of this model, the $G$-Wishart distribution serves as the conjugate prior for inverse covariance matrices satisfying graphical constraints.  While it is straightforward to posit the unnormalized densities, the normalizing constants of these distributions have been known only for graphs that are chordal, or decomposable.  Up until now, it was unknown whether the normalizing constant for a general graph could be represented explicitly, and a considerable body of computational literature emerged that attempted to avoid this apparent intractability.  We close this question by providing an explicit representation of the $G$-Wishart normalizing constant for general graphs.
\end{abstract}

\begin{keyword}[class=MSC]
\kwd[Primary ]{62H05}
\kwd{60E05}
\kwd[; secondary ]{62E15}
\end{keyword}

\begin{keyword}
\kwd{Bartlett decomposition; Bipartite graph; Cholesky decomposition; Chordal graph; Directed acyclic graph;  $G$-Wishart distribution; Gaussian graphical model; Generalized hypergeometric function of matrix argument; Moral graph; Normalizing constant; Wishart distribution.}
\end{keyword}

\end{frontmatter}

\section{Introduction}
\label{introduction}
\setcounter{equation}{0}

Let $G = (V,E)$ be an undirected graph with vertex set $V = \{1,\ldots,p\}$ and edge set $E$. Let $\mathbb{S}^p$ be the set of symmetric $p \times p$ matrices and $\mathbb{S}^p_{\succ 0}$ the cone of positive definite matrices in $\mathbb{S}^p$. Let 
\begin{equation}
\mathbb{S}^p_{\succ 0}(G) = \{M = (M_{ij}) \in\mathbb{S}^p_{\succ 0} \mid M_{ij} = 0 \textrm{ for all } (i,j)\notin E\}
\end{equation}
denote the cone in $\mathbb{S}^p$ of positive definite matrices with zeros in all entries not corresponding to edges in the graph. Note that the positivity of all diagonal entries $M_{ii}$ follows from the positive-definiteness of the matrices $M$.

A random vector $X \in \mathbb{R}^p$ is said to \emph{satisfy the Gaussian graphical model (GGM) with graph $G$} if $X$ has a multivariate normal distribution with mean $\mu$ and covariance matrix $\Sigma$, denoted $X \sim \mathcal{N}_p(\mu,\Sigma)$, where $\Sigma^{-1}\in \mathbb{S}^p_{\succ 0}(G)$.  The inverse covariance matrix $\Sigma^{-1}$ is called the \emph{concentration matrix} and, throughout this paper, we denote $\Sigma^{-1}$ by $K$. 

Statistical inference for the concentration matrix $K$ constrained to $\mathbb{S}^{p}_{\succ 0}(G)$ goes back to \mbox{\citet{dempster_1972},} who proposed an algorithm for determining the maximum likelihood estimator \citep[cf.,][]{speed_kiiveri_1986}.  A Bayesian framework for this problem was introduced by \citet{dawid_lauritzen_1993}, who proposed the Hyper-Inverse Wishart (HIW) prior distribution for \emph{chordal} (also known as \emph{decomposable} or \emph{triangulated}) graphs $G$.

Chordal graphs enjoy a rich set of properties that led the HIW distribution to be particularly amenable to Bayesian analysis. Indeed, for nearly a decade after the introduction of GGMs, focus on the Bayesian use of GGMs was placed primarily on chordal graphs \citep[see, e.g.,][]{giudicci_green_1999}.  This tractability stems from two causes: the ability to sample directly from HIWs \citep{piccioni_2000}, and the ability to calculate their normalizing constants, which are critical quantities when comparing graphs or nesting GGMs in hierarchical structures.

\citet{Roverato2002} extended the HIW to general $G$. Focusing on $K$, \citet{Atay2003} further studied this prior distribution. Following \citet{letac_massam_2007}, \citet{LenkoskiDobra2011} termed this distribution the $G$-Wishart.  For $D \in {\mathbb{S}^p_{\succ 0}(G)}$ and $\delta \in \mathbb{R}$, the $G$-Wishart density has the form
$$
f_{G}(K\mid \delta, D) \propto |K|^{\frac12(\delta-2)}\exp(-\tfrac{1}{2}\tr(KD))\; \mathbf{1}_{K\in {\mathbb{S}^p_{\succ 0}(G)}}.
$$
This distribution is conjugate \citep{Roverato2002} and proper for $\delta > 1$ \citep{mitsakakis_et_2011}.

Early work on the $G$-Wishart distribution was largely computational in nature \citep{cheng_lenkoski_2012, dobra_lenkoski_2011,dobra_et_2011, jones_et_2005, LenkoskiDobra2011, wang_carvalho_2010, wang_li_2012} and was predicated on two assumptions: first, that a direct sampler was unavailable for this class of models and, second, that the normalizing constant could not be explicitly calculated.  \citet{Lenkoski2013} developed a direct sampler for $G$-Wishart variates, mimicking the algorithm of \citet{dempster_1972}, thereby resolving the first open question. In this paper, we close the second question by deriving for general graphs $G$ an explicit formula for the $G$-Wishart normalizing constant, 
$$
C_{G}(\delta, D) = \int_{\mathbb{S}^p_{\succ 0}(G)} |K|^{\frac12(\delta-2)}\exp(-\tfrac{1}{2}\tr(KD)) \dd K,
$$
where $\dd K = \prod_{i=1}^p \dd k_{ii} \;\cdot\; \prod_{i<j,\; (i,j) \in E} \dd k_{ij}$ denotes the product of differentials corresponding to all distinct non-zero entries in $K$. 

For notational simplicity, we will consider the integral
$$
I_{G}(\delta, D) = \int_{\mathbb{S}^p_{\succ 0}(G)} |K|^{\delta}\exp(-\tr(KD)) \dd K,
$$
which can be expressed in terms of $C_{G}(\delta, D)$ as follows:  Denote by $|E|$ the cardinality of the edge set $E$; by changing variables, $K \to 2K$, one obtains 
$$
C_{G}(\delta, D) = 2^{\frac{1}{2}p\delta+|E|}\; I_{G}\left(\tfrac12(\delta-2), D\right).
$$

The normalizing constant $I_{G}(\delta, D)$ is well-known for {\it complete graphs}, in which every pair of vertices is connected by an edge.  In such cases,
\begin{equation}
\label{Siegel}
I_{\textrm{complete}}(\delta,D) = |D|^{-\left(\delta+\frac12(p+1)\right)} \; \Gamma_p\left(\delta+\tfrac12(p+1)\right),
\end{equation}
where 
\begin{equation}
\label{multivariategamma}
\Gamma_p(\alpha) = \pi^{p(p-1)/4}\prod_{i=1}^p \Gamma\left(\alpha - \tfrac12(i-1)\right),
\end{equation} 
$\hbox{Re}(\alpha) > \tfrac12(p-1)$, is the {\emph{multivariate gamma function}}.  The formula (\ref{Siegel}) has a long history, dating back to \citet{Wishart1928}, \citet{Wishart-Bartlett1928}, \citet{Ingham1933}, \citet[{\it Hilfssatz} 37]{Siegel1935}, \citet{Maass1971}, and many derivations of a statistical nature; see \citet{Olkin2002} and \citet[p. 224]{Giri2004}.

As noted above, $I_{G}(\delta, D)$ is also known for \emph{chordal graphs}. Let $G$ be chordal, and let $(T_1,\ldots,T_d)$ denote a \emph{perfect sequence} of cliques (i.e.,~complete subgraphs) of $V$.  Further, let $S_i = (T_1\cup\cdots\cup T_{i})\cap T_{i+1}$, $i=1,\ldots,d-1$; then, $S_1,\ldots,S_{d-1}$ are called the \emph{separators} of $G$. Note that the separators $S_i$ are cliques as well. We denote the cardinalities by $t_i = |T_i|$ and $s_i = |S_i|$. For $S\subseteq \{1,\dots ,p\}$, let $D_{SS}$ denote the submatrix of $D$ corresponding to the rows and columns in $S$.  Then,
\begin{eqnarray}
I_{G}(\delta, D) &=& \frac{\prod_{i=1}^d I_{T_i}(\delta, D_{T_iT_i})}{\prod_{j=1}^{d-1} I_{S_j}(\delta, D_{S_jS_j})}\nonumber\\
&=& \frac{\prod_{i=1}^d \Big( |D_{T_iT_i}|^{-\big(\delta+\frac12(t_i+1)\big)} \; \Gamma_{t_i}\big(\delta+\frac12(t_i+1)\big)\Big)}{\prod_{j=1}^{d-1} \Big( |D_{S_jS_j}|^{-\big(\delta+\frac12(s_j+1)\big)} \; \Gamma_{s_j}\big(\delta+\frac12(s_j+1)\big)\Big)}.\label{eq_chordal}
\end{eqnarray}
This result follows because, for a chordal graph $G$, the $G$-Wishart density function can be factored into a product of density functions \citep{dawid_lauritzen_1993}. 

For non-chordal graphs the problem of calculating $I_{G}(\delta, D)$ has been open for over 20 years, and much of the computational methodology mentioned above was developed with the objective of either approximating $I_{G}(\delta, D)$ or avoiding its calculation.  Our result shows that an explicit representation of this quantity is indeed possible.  

In deriving the explicit formula for the normalizing constant $I_G(\delta)$, we utilize methods that are familiar to researchers in this area. These methods include the Cholesky decomposition or the Bartlett decomposition of a positive definite matrix, Schur complements for factorizing determinants, and the chordal cover of a graph. Furthermore, we make crucial use of certain formulas from the theory of generalized hypergeometric functions of matrix argument ~\cite{Herz1955, James1964}, and analytic continuation of differential operators on the cone of positive definite matrices~\cite{Garding}.

The article proceeds as follows. In Section~\ref{sec_ID} we treat the case in which $D = \mathbb{I}_p$, the $p\times p$ identity matrix, deriving a closed-form product formula for the normalizing constant $I_{G}(\delta, \mathbb{I}_p)$ for various classes of non-chordal graphs. In Section~\ref{sec_D} we consider the case of general matrices $D$; in our main result in Theorem~\ref{real_thm} we derive an explicit representation of $I_{G}(\delta, D)$ for general graphs as a closed-form product formula involving differentials of principal minors of $D$. We end with a brief discussion in Section~\ref{sec:discussion}.

\section{Computing the normalizing constant \texorpdfstring{$\boldsymbol{I_{G}(\delta, \mathbb{I}_p)}$}{IGdeltaIp}}
\label{sec_ID}
\setcounter{equation}{0}

In this section, we compute $I_{G}(\delta, \mathbb{I}_p)$ for two classes of non-chordal graphs. We begin in Section~\ref{sec_bipartite} with the class of complete bipartite graphs and use an approach based on Schur complements to attain a closed-form formula. In Section~\ref{sec_directed} we introduce directed Gaussian graphical models and show how these models relate to a Cholesky factor approach to computing $I_{G}(\delta, \mathbb{I}_p)$. This leads to a formula for computing normalizing constants of graphs with \emph{minimum fill-in} equal to 1, namely graphs that become chordal after the addition of one edge. However, these approaches do not lead to a general formula for the normalizing constant in the case $D=\mathbb{I}_p$. To obtain a formula for any graph $G$, we found it necessary to calculate the more general case $I_{G}(\delta, D)$ and then specialize $D=\mathbb{I}_p$, as is done for moment generating functions or Laplace transforms. This is explained in Section~\ref{sec_D}.

\subsection{Bipartite graphs}
\label{sec_bipartite}

A \emph{complete bipartite graph} on $m+n$ vertices, denoted by $H_{m,n}$, is an undirected graph whose vertices can be divided into disjoint sets $U=\{1,\dots, m\}$ and $V=\{m+1,\dots ,m+n\}$, such that each vertex in $U$ is connected to every vertex in $V$, but there are no edges within $U$ or $V$. For the graph $H_{m,n}$, the corresponding matrix $K$ is a block matrix, 
$$
K = 
\begin{pmatrix}
K_{AA} & K_{AB} \\ K_{AB}^T & K_{BB}
\end{pmatrix},
$$
where $K_{AA}, K_{BB}$ are diagonal matrices of sizes $m\times m$ and $n\times n$, respectively, and $K_{AB}$ is \emph{unconstrained}, i.e., no entry of $K_{AB}$ is constrained to be zero.

\begin{prop}
\label{prop_example_bipartite} 
The integral $I_{H_{m,n}}(\delta, \mathbb{I}_{m+n})$ converges absolutely for all $\delta> -1$, and
\begin{multline}
\label{eq:prop_example_bipartite}
I_{H_{m,n}}(\delta, \mathbb{I}_{m+n}) = \big[\Gamma\big(\delta + \tfrac12 n +1\big)\big]^m \; \big[\Gamma(\delta+\tfrac12 m +1)\big]^n \\
\times \frac{\Gamma_{m+n}\big(\delta+\tfrac12(m+n+1)\big)}{\Gamma_m\big(\delta + \tfrac12(m+n+1)\big) \; \Gamma_n\big(\delta+\tfrac12(m+n+1)\big)}. 
\end{multline}
\end{prop}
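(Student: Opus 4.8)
The plan is to follow the block-matrix strategy of the proof of Proposition~\ref{prop_example}, the only essential change being that, for $H_{m,n}$, the off-diagonal block $K_{AB}$ must be removed by a two-sided linear change of variables rather than by the Gaussian shift used there; the shift is unavailable now because \emph{both} diagonal blocks $K_{AA}$ and $K_{BB}$ are diagonal, not complete. Write $K$ in the displayed block form with $A=U$ and $B=V$, so that $K_{AA}=\diag(\lambda_1,\dots,\lambda_m)$, $K_{BB}=\diag(\kappa_1,\dots,\kappa_n)$, and $K_{AB}\in\R^{m\times n}$ ranges freely. Since the cross block contributes nothing to the trace, $\tr(K)=\sum_i\lambda_i+\sum_j\kappa_j$; the Schur-complement identity gives $|K|=|K_{AA}|\cdot|K_{BB}-K_{AB}^T K_{AA}^{-1}K_{AB}|$, and $K\in\mathbb{S}^{m+n}_{\succ 0}$ iff $K_{AA}\succ 0$, $K_{BB}\succ 0$, and $K_{BB}-K_{AB}^T K_{AA}^{-1}K_{AB}\succ 0$. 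Because $H_{m,n}\cong H_{n,m}$ and the right-hand side of (\ref{eq:prop_example_bipartite}) is symmetric in $(m,n)$, we may assume $m\ge n$ throughout.

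The key step is the substitution $K_{AB}=K_{AA}^{1/2}\,W\,K_{BB}^{1/2}$ with $W\in\R^{m\times n}$, whose Jacobian is $|K_{AA}|^{n/2}\,|K_{BB}|^{m/2}$; under it $K_{AB}^T K_{AA}^{-1}K_{AB}=K_{BB}^{1/2}W^TW\,K_{BB}^{1/2}$, so that $|K_{BB}-K_{AB}^T K_{AA}^{-1}K_{AB}|=|K_{BB}|\cdot|\mathbb{I}_n-W^TW|$ and the positivity constraint on the Schur complement becomes simply $W^TW\prec\mathbb{I}_n$, independently of $K_{AA}$ and $K_{BB}$. Hence the integrand times the Jacobian factors as $|K_{AA}|^{\delta+n/2}e^{-\tr K_{AA}}\cdot|K_{BB}|^{\delta+m/2}e^{-\tr K_{BB}}\cdot|\mathbb{I}_n-W^TW|^{\delta}$, and the integral splits into a product of three integrals. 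The $K_{AA}$- and $K_{BB}$-integrals are over diagonal positive matrices, so each is a product of scalar gamma integrals: $\int_0^\infty\lambda^{\delta+n/2}e^{-\lambda}\dd\lambda=\Gamma(\delta+\tfrac12 n+1)$ taken $m$ times, and $\int_0^\infty\kappa^{\delta+m/2}e^{-\kappa}\dd\kappa=\Gamma(\delta+\tfrac12 m+1)$ taken $n$ times, producing the two bracketed factors in (\ref{eq:prop_example_bipartite}).

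It remains to evaluate $c_{m,n}(\delta):=\int_{W^TW\prec\mathbb{I}_n}|\mathbb{I}_n-W^TW|^{\delta}\dd W$. Here I would use the matrix-variate change of variables $S=W^TW$, valid since $m\ge n$ (Muirhead \cite[Thm.~2.1.14]{Muirhead1982}): $\int_{\R^{m\times n}}g(W^TW)\dd W=\frac{\pi^{mn/2}}{\Gamma_n(m/2)}\int_{S\succ 0}g(S)\,|S|^{(m-n-1)/2}\dd S$. This turns $c_{m,n}(\delta)$ into the multivariate beta integral $\frac{\pi^{mn/2}}{\Gamma_n(m/2)}\int_{0\prec S\prec\mathbb{I}_n}|\mathbb{I}_n-S|^{\delta}|S|^{(m-n-1)/2}\dd S$, which I evaluate using $\int_{0\prec S\prec\mathbb{I}_n}|S|^{a-(n+1)/2}|\mathbb{I}_n-S|^{b-(n+1)/2}\dd S=\Gamma_n(a)\Gamma_n(b)/\Gamma_n(a+b)$ with $a=\tfrac12 m$ and $b=\delta+\tfrac12(n+1)$, obtaining $c_{m,n}(\delta)=\pi^{mn/2}\,\Gamma_n\!\big(\delta+\tfrac12(n+1)\big)\big/\Gamma_n\!\big(\delta+\tfrac12(m+n+1)\big)$. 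Finally I would rewrite this using the elementary multiplication identity $\Gamma_{m+n}(\alpha)=\pi^{mn/2}\,\Gamma_m(\alpha)\,\Gamma_n\!\big(\alpha-\tfrac12 m\big)$, which follows at once from (\ref{multivariategamma}) by splitting the product at index $m$, applied with $\alpha=\delta+\tfrac12(m+n+1)$; this turns $c_{m,n}(\delta)$ into the ratio $\Gamma_{m+n}/(\Gamma_m\Gamma_n)$ appearing in (\ref{eq:prop_example_bipartite}). Absolute convergence for exactly $\delta>-1$ emerges along the way: nonnegativity of the integrand reduces it to convergence of the three factors, and the scalar gamma integrals require $\delta+\tfrac12 n>-1$ and $\delta+\tfrac12 m>-1$ (automatic for $m,n\ge1$), while the beta integral requires $\delta+\tfrac12(n+1)>\tfrac12(n-1)$, i.e.\ $\delta>-1$, together with $m\ge n$.

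The main obstacle is this last step: recognizing $c_{m,n}(\delta)$ as a matrix-variate beta integral, invoking the correct Jacobian for $W\mapsto W^TW$ (which is precisely why the reduction to $m\ge n$ is used), and keeping the half-integer shifts straight so that the powers of $|K_{BB}|$ collect to exactly $\delta+\tfrac12 m$ and the parameters of the beta integral come out as stated. Everything else — the block decomposition, the linear substitution for $K_{AB}$, the scalar gamma integrals, and the gamma multiplication identity — is routine.
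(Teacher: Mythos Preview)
Your proof is correct and follows essentially the same approach as the paper: the block decomposition, the substitution $K_{AB}=K_{AA}^{1/2}WK_{BB}^{1/2}$ with Jacobian $|K_{AA}|^{n/2}|K_{BB}|^{m/2}$, and the separation into diagonal gamma integrals times the $W$-integral over $\{W^TW\prec\mathbb{I}_n\}$. The only difference is in evaluating that last integral: the paper quotes the special case (\ref{Thm_3.1}) of Herz's matrix Poisson integral directly, whereas you reduce $m\ge n$, pass to $S=W^TW$ via Muirhead's Jacobian, and evaluate the resulting multivariate beta integral --- this gives the same value $\pi^{mn/2}\Gamma_n(\delta+\tfrac12(n+1))/\Gamma_n(\delta+\tfrac12(m+n+1))$ and, together with your explicit use of the splitting identity $\Gamma_{m+n}(\alpha)=\pi^{mn/2}\Gamma_m(\alpha)\Gamma_n(\alpha-\tfrac12 m)$, is somewhat more self-contained.
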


\begin{proof}
Applying the Schur complement formula for block matrices, we obtain 
\begin{align*}
I_{H_{m,n}}(\delta, \mathbb{I}_{m+n}) &= \int_{\mathbb{S}^{m+n}_{\succ 0}(G)} |K|^{\delta}\exp(-\tr(K)) \dd K\\
&= \int_{\mathbb{S}^{m+n}_{\succ 0}(G)} |K_{AA}|^{\delta}\; |K_{BB}-K_{AB}^T(K_{AA})^{-1}K_{AB}|^{\delta}\\
&\qquad\qquad\quad \cdot\exp(-\tr(K_{AA}) - \tr(K_{BB})) \dd K_{AA} \dd K_{AB} \dd K_{BB}.
\end{align*}
Since $K_{AB}$ is unconstrained, we can change variables by replacing $K_{AB}$ by $K_{AA}^{1/2}K_{AB}K_{BB}^{1/2}$; then the corresponding Jacobian is $|K_{AA}|^{n/2}|K_{BB}|^{m/2}$.  Since 
$$
|K_{BB}-K_{BB}^{1/2}K_{AB}^TK_{AB}K_{BB}^{1/2}| = |K_{BB}| \cdot |\mathbb{I}_{n}-K_{AB}^TK_{AB}|,
$$
we obtain 
\begin{align*}
I_{H_{m,n}}(\delta, \mathbb{I}_{m+n}) 
&= \int_{\mathbb{S}^{m+n}_{\succ 0}(G)} |K_{AA}|^{\delta+\frac{1}{2}n}\; |K_{BB}|^{\delta + \frac{1}{2}m}\; |\mathbb{I}_{n}-K_{AB}^TK_{AB}|^{\delta}\\
&\qquad\qquad\quad \cdot\exp(-\tr(K_{AA}) - \tr(K_{BB})) \dd K_{AA} \dd K_{AB} \dd K_{BB},
\end{align*}
where the range of integration is such that each diagonal entry of $K_{AA}$ and $K_{BB}$ is positive, $K_{AB}$ is unconstrained, and $\mathbb{I}_{n}-K_{AB}^TK_{AB}$ is positive definite.  
Integrating over each diagonal entry of $K_{AA}$ and $K_{BB}$, we obtain
\begin{multline*}
I_{H_{m,n}}(\delta, \mathbb{I}_{m+n}) = \big[\Gamma\big(\delta + \tfrac12 n + 1\big)\big]^m \; \big[\Gamma\big(\delta+\tfrac12 m + 1\big)\big]^n \\
\ \times \int_{K_{AB}} |\mathbb{I}_{n}-K_{AB}^TK_{AB}|^{\delta} \dd K_{AB}.
\end{multline*}
Finally, since $K_{AB}$ is unconstrained, we deduce from (\ref{Thm_3.1}) the value of the latter integral.
\end{proof}

In this computation, we used the special structure of the graph to decompose the inverse covariance matrix $K$ into a special block matrix. In Section~~\ref{sec_D} we use a similar approach to show how the normalizing constant changes when removing a clique (i.e.~a completely connected subgraph) from a graph. This leads to an algorithm for computing the normalizing constant $I_{G}(\delta, D)$ for any graph $G$. In the reminder of this section, we show how an approach based on the Cholesky factorization of $K$ can be used to easily compute the normalizing constant for graphs that have minimum fill-in equal to 1. This requires introducing directed Gaussian graphical models.

\subsection{Directed Gaussian graphical models}
\label{sec_directed}

Let $\mathcal{G}=(V,\mathcal{E})$ be a directed acyclic graph (DAG) consisting of vertices $V=\{1,\dots ,p\}$ and directed edges $\mathcal{E}$. We assume, without loss of generality, that the vertices in $\mathcal{G}$ are \emph{topologically ordered}, meaning that $i<j$ for all $(i,j)\in \mathcal{E}$. We associate to $\mathcal{G}$ a strictly upper-triangular matrix $B$ of edge weights. So $B=(b_{ij})$ with $b_{ij}\neq 0$ if and only if $(i,j)\in \mathcal{E}$. Then a \emph{directed Gaussian graphical model} on $\mathcal{G}$ for a random variable $X \in \mathbb{R}^p$ is defined by $X \sim \mathcal{N}_p(0,\Sigma)$ with $\Sigma = [(I-B)D(I-B)^T]^{-1}$, where $D$ is a diagonal matrix. 

To simplify notation, let $a_{ii} = d_{ii}$ and $a_{ij} = -b_{ij}\sqrt{d_{jj}}$, and let $A=(A_{ij})$ with $A_{ii}=\sqrt{a_{ii}}$ and $A_{ij} = -a_{ij}$ for all $i\neq j$. Then $\Sigma^{-1} = AA^T$, and $a_{ij}\neq 0$ for $i\neq j$ if and only if $(i,j)\in \mathcal{E}$. Note that $AA^T$ is the upper Cholesky decomposition of $\Sigma^{-1}$. Such a decomposition exists for any positive definite matrix and is unique.

We will associate to a DAG, $\mathcal{G}=(V,\mathcal{E})$, and its corresponding directed Gaussian graphical model two undirected graphs. We denote by $\mathcal{G}^s=(V,\mathcal{E}^s)$ the \emph{skeleton} of $\mathcal{G}$ obtained by replacing all directed edges in $\mathcal{G}$ by undirected edges. We denote by $\mathcal{G}^m=(V,\mathcal{E}^m)$ the \emph{moral graph} of $\mathcal{G}$, which reflects the conditional independencies in $\mathcal{N}_p(0,\Sigma)$, i.e., 
$$
(i,j)\notin \mathcal{E}^m \quad \textrm{if and only if} \quad X_{i}\indep X_{j}\mid X_{V\setminus\{i,j\}}.
$$
Since $\Sigma^{-1}$ also encodes the conditional independence relations of the form $X_{i}\indep X_{j}\mid X_{V\setminus\{i,j\}}$, this is equivalent to the criterion, 
$$
(i,j)\notin \mathcal{E}^m \quad \textrm{if and only if} \quad \big(\Sigma^{-1}\big)_{ij}=0.
$$ 
So, the moral graph $\mathcal{G}^m$ reflects the zero pattern of $\Sigma^{-1}$.

The moral graph of $\mathcal{G}$ can also be defined graph-theoretically: It is formed by connecting all nodes $i,j\in V$ that have a common child in $\mathcal{G}$, i.e., for which there exists a node $k\in V\setminus\{i,j\}$ such that $(i,k), (j,k)\in \mathcal{E}$, and then making all edges in the graph undirected. The name stems from the fact that the moral graph is obtained by `marrying' the parents. For a review of basic graph-theoretic concepts see e.g.~\citep[Chapter~2]{Lauritzen1996}.

The moral graph is an important concept for our application. Let $G=(V,E)$ be an undirected graph, with $V=\{1,\dots ,p\}$, for which we want to compute $I_{G}(\delta, \mathbb{I}_p)$. Let $G_0=(V,E_0)$ with $G_0=G$. Given a labeling of the vertices $V$ we associate a DAG, $\;\mathcal{G}_0=(V,\mathcal{E}_0)$, to $G_0$ by orienting the edges in $E_0$ according to the topological ordering, i.e.,~for all $(i,j)\in E_0$ let $(i,j)\in\mathcal{E}_0$ if $i<j$. Note that the skeleton of $\mathcal{G}_0$ is the original undirected graph $G_0$. Let $G_1=(V,E_1)$ be the moral graph of $\mathcal{G}_0$, i.e., $G_1 = \mathcal{G}_0^m$, and let $\mathcal{G}_1=(V,\mathcal{E}_1)$ be the corresponding DAG obtained by orienting the edges in $E_1$ according to the ordering of the vertices $V$. So $\mathcal{G}_0$ is a subgraph of $\mathcal{G}_1$. We repeat this procedure until $\mathcal{G}_{q+1}=\mathcal{G}_{q}$. This results in a sequence of DAGs, 
$$
\mathcal{G}_{0}\subsetneq \mathcal{G}_{1} \subsetneq\cdots \subsetneq \mathcal{G}_{q}.
$$

In the following, we denote by $\mathcal{G}=(V,\mathcal{E})$ the DAG associated to $G=(V,E)$ obtained by orienting the edges in $E$ according to the ordering of the vertices $V$. We denote by $\bar{\mathcal{G}}=(V,\bar{\mathcal{E}})$ the DAG associated to $G=(V,E)$ obtained by repeatedly marrying parents in $\mathcal{G}$, i.e.~$\bar{\mathcal{G}} = \mathcal{G}_{q}$. We call $\bar{\mathcal{G}}$ the \emph{moral DAG} of $G$. Note that $\bar{\mathcal{G}}^s$, the skeleton of $\bar{\mathcal{G}}$, is a chordal graph with $G\subset \bar{\mathcal{G}}^s$ (\citet[Chapter~2]{Lauritzen1996}), so $\bar{\mathcal{G}}^s$ is a \emph{chordal cover} of $G$. A chordal cover in general is not unique; however, $\bar{\mathcal{G}}^s$ is the unique chordal cover obtained by repeatedly marrying parents according to the vertex labeling $V$. We call this chordal cover the \emph{moral chordal graph} of $G$ and denote it by $\bar{G} = (V,\bar{E})$.

We now show how to deduce from the undirected graph $G=(V,E)$ the normalizing constant $I_{G}(\delta, \mathbb{I}_p)$ as an integral in terms of the Cholesky factor $A$. Since the proof is the same for general correlation matrices $D\in\mathbb{S}^p_{\succ 0}$, we give the result directly for $I_{G}(\delta, D)$. In the following, we use the standard graph-theoretic notation ${\rm{indeg}}(i)$ for the \emph{indegree} of node $i$, representing the number of edges ``arriving at'' (or ``pointing to'') node $i$ in a DAG $\mathcal{G}$.

\begin{thm}
\label{thm_int_D}
Let $G=(V,E)$ be an undirected graph with vertices $V=\{1,\dots ,p\}$. Let $\mathcal{G}=(V,\mathcal{E})$ be the DAG associated to $G=(V,E)$ obtained by orienting the edges in $E$ according to the ordering of the vertices in $V$. Let $\bar{\mathcal{G}}=(V,\bar{\mathcal{E}})$ denote the moral DAG of $G$ and $\bar{G}=(V,\bar{E})$ its skeleton, the moral chordal graph of $G$. Let $A$ be an upper-triangular $p\times p$ matrix  with diagonal entries $A_{ii}=\sqrt{a_{ii}}$ and off-diagonal entries $A_{ij}=-a_{ij}$ for all $i<j$. Then
\begin{align*}
I_{G}(\delta,D) &= \int_{A_*} \bigg(\prod_{i=1}^p a_{ii}^{\delta+\frac12\;{\rm{indeg}}(i)}\bigg) \; \exp\bigg[-\sum_{i=1}^p \bigg(a_{ii} +\sum_{j:\; (i,j)\in\bar{\mathcal{E}}} a_{ij}^2\bigg)\bigg]\\
& \qquad\quad \cdot \exp\bigg[- 2\sum_{(i,j)\in\mathcal{E}}d_{ij}\bigg(-a_{ij}\sqrt{a_{jj}} +\sum_{l:\: (i,l), (j,l)\in \bar{\mathcal{E}}} \!\! a_{il}a_{jl} \bigg)\bigg] \dd A_*,
\end{align*}
where $D\in\mathbb{S}^{p}_{\succ 0}$ is a correlation matrix, $A_* = \{a_{ij} : \; i=j \textrm{ or } (i,j)\in\mathcal{E}\}$, the range of $a_{ii}$ is $(0,\infty)$, the range of $a_{ij}$ for $(i,j)\in\mathcal{E}$ is $(-\infty, \infty)$, ${\rm indeg}(i)$ denotes the indegree of node $i$ in $\mathcal{G}$, and for $a_{ij}\notin A_*$
$$
a_{ij} = 
 \begin{cases}
\ \; \; 0, & \textrm{if }\, (i,j)\notin \bar{\mathcal{E}}, \\
\displaystyle{\frac{1}{\sqrt{a_{jj}}}} \mathop{\sum}\limits_{l\in V \atop (i,l), (j,l)\in \bar{\mathcal{E}}} \!\! a_{il}a_{jl}, & \textrm{if }\, (i,j)\in\bar{\mathcal{E}}\setminus \mathcal{E}.
 \end{cases}
$$
\end{thm}

\begin{proof}
Let $K\in \mathbb{S}^p_{\succ 0}(G)$. Since $G\subset \bar{G}$, then $K\in \mathbb{S}^p_{\succ 0}(\bar{G})$ and we can view $K$ as an inverse covariance matrix of a directed Gaussian graphical model on $\bar{\mathcal{G}}$. Because the Cholesky decomposition is unique, $A$ is a weighted adjacency matrix of $\bar{\mathcal{G}}$ and hence $a_{ij}=0$ for all $(i,j)\notin \bar{\mathcal{E}}$. 

Let $(i,j)$ be an edge that is present in the moral chordal graph $\bar{G}$ but not in $G$. We can assume that $i<j$. Hence $(i,j)\in\bar{\mathcal{E}}\setminus \mathcal{E}$ and therefore
$$
0 = K_{ij} = (AA^T)_{ij} = -a_{ij}\sqrt{a_{jj}} + \sum_{l > \max(i,j)} a_{il} a_{jl}.
$$
Thus, for each edge $(i,j)\in\bar{\mathcal{E}}\setminus \mathcal{E}$, we obtain an equation,
$$
a_{ij} = \frac{1}{\sqrt{a_{jj}}} \sum_{l \in V \atop (i,l), (j,l) \in \bar{\mathcal{E}}} a_{il} a_{jl}.
$$

To complete the proof, we need to compute the Jacobian $J$ of the change of variables from $K$ to $A$.  We list the $a_{ij}$'s column-wise, meaning that $a_{ij}$ precedes $a_{lm}$ if $j<m$ or if $j=m$ and $i<l$, omitting $a_{ij}$ for $(i,j)\notin\mathcal{E}$, corresponding to the zeros in $K$. We list the $k_{ij}$'s in the same ordering. Let the $a_{ij}$'s correspond to the columns of the Jacobian, while the $k_{ij}$'s correspond to the rows. In order to form $J$, we calculate the partial derivative of each $k_{ij}$ with respect to each $a_{lm}$. Since $K=AA^T$ and $A$ is upper-triangular then $J$ also is upper-triangular; therefore, 
$|J| \;=\; |\diag(J)|$.  
Since
$$
k_{ii} = a_{ii} + \sum_{(i,j)\in\bar{\mathcal{E}}} a_{ij}^2 \quad\textrm{ and }\quad k_{ij} = -a_{ij}\sqrt{a_{jj}} \ + \ \sum_{l\in V \atop (i,l),(j,l)\in\bar{\mathcal{E}}} a_{il}a_{jl},
$$
for all $(i,j)\in\mathcal{E}$, then 
$$
|J| \;=\;\prod_{i=1}^p \; a_{ii}^{\textrm{indeg}(i)/2}.
$$
Collecting together these formulas completes the proof.
\end{proof}

The number of edges in $\bar{\mathcal{E}}\setminus \mathcal{E}$ depend on the ordering of the vertices. It is well-known (see e.g.~\citet[Chapter~2]{Lauritzen1996}) that one can find an ordering of the vertices such that $\bar{\mathcal{G}} = \mathcal{G}$ if and only if $G$ is chordal. Hence when $G$ is chordal we can directly derive the normalizing constant of $I_{G}(\delta,\mathbb{I}_p)$ from Theorem~\ref{thm_int_D} by evaluating Gaussian and Gamma integrals. One could also prove the following corollary using Equation (\ref{eq_chordal}).

\begin{cor}
\label{prop_chordal}
Let $G=(V,E)$ be a chordal graph, where the vertices $V=\{1,\dots ,p\}$ are labelled according to a perfect ordering. Then
$$
I_{G}(\delta,\mathbb{I}_p) = \pi^{|E|/2} \;\prod_{i=1}^p\;\Gamma\big(\delta + \tfrac12 \; {\rm{indeg}}(i) + 1\big).
$$
where ${\rm{indeg}}(i)$ denotes the indegree of node $i$ in the corresponding DAG $\mathcal{G}$.
\end{cor}

\begin{figure}[t]
\centering
\includegraphics[scale=0.8]{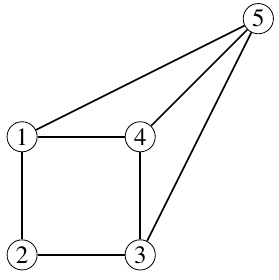}\qquad\qquad\qquad\qquad \includegraphics[scale=0.8]{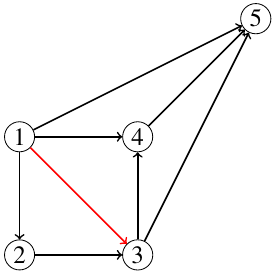}
\caption{Undirected graph $G_5$ (left) discussed in Example~\ref{G5example} and its moral DAG $\bar{\mathcal{G}_5}$ (right).}
\label{fig_graph_5}
\end{figure}

\begin{example}
\label{G5example}
We illustrate Theorem~\ref{thm_int_D} by studying the non-chordal graph $G_5$, shown in Figure~\ref{fig_graph_5} (left). We wish to calculate 
\begin{equation}
\label{G5_integral}
I_{G_5}(\delta,\mathbb{I}_5) = \int_{K \in \mathbb{S}^5_{\succ 0}(G_5)} |K|^{\delta} \exp\big(-\tr(K)\big) \dd K
\end{equation}
through the change of variables, $K = AA^T$. The moral DAG of $G_5$ is denoted by $\bar{\mathcal{G}_5}$ and depicted in Figure \ref{fig_graph_5} (right). Since the edges $(2,4)$ and $(2,5)$ are missing in $\bar{\mathcal{G}_5}$, we immediately deduce that $a_{24}=a_{25}=0$. In this example, we chose an ordering where only one edge needed to be added in the process of marrying parents, namely the edge (1,3). This results in one equation for $a_{13}$, which can be deduced from the \emph{colliders} over the additional edge, i.e.,~nodes $l\in V$ with $(1,l), (3,l)\in\bar{\mathcal{G}}$, and results in
$$
a_{13} = \frac{1}{\sqrt{a_{33}}} (a_{14}a_{34} + a_{15}a_{35}).
$$ 
Finally, the Jacobian can be deduced from the indegrees of the nodes in $\mathcal{G}_5$, which corresponds to the moral DAG $\bar{\mathcal{G}_5}$ after omitting the red edge. Therefore, the determinant of the Jacobian is 
$$
a_{11}^{0/2}a_{22}^{1/2}a_{33}^{1/2}a_{44}^{2/2}a_{55}^{3/2},
$$
and we find that the integral (\ref{G5_integral}) equals 
\begin{align*}
\int_A a_{11}^{\delta} a_{22}^{\delta+1/2}a_{33}^{\delta+1/2}& a_{44}^{\delta+1} a_{55}^{\delta +3/2} \\
\times \exp\Big[-\Big(a_{11}&+ a_{12}^2+ \Big(\frac{a_{14}a_{34} + a_{15}a_{35}}{\sqrt{a_{33}}}\Big)^2 + a_{14}^2 + a_{15}^2 \\
& + a_{22} + a_{23}^2 + a_{33}+ a_{34}^2 + a_{35}^2 + a_{44} + a_{45}^2 + a_{55}\Big)\Big] \dd A,
\end{align*}
where $a_{ii} > 0$; $a_{ij} \in \R$, $i < j$; and $\hbox{d}A$ denotes the product of all differentials.  
\end{example}

\medskip

As seen in Example~\ref{G5example}, the equations corresponding to the additional edges $(i,j)\in\bar{\mathcal{E}}\setminus \mathcal{E}$ complicate the integral significantly. Therefore, given a non-chordal graph $G$, it is desirable to find an ordering such that $|\bar{\mathcal{E}}\setminus\mathcal{E}|$ is minimized. This ordering is given by a perfect ordering of a minimal chordal cover of $G$, where minimality is with respect to the number of edges that need to be added in order to make $G$ chordal. Using Corollary~\ref{prop_chordal}, we can compute the normalizing constant corresponding to a minimal chordal cover of $G$. The question arises: How can one compute the normalizing constant of $G$ from the normalizing constant of a minimal chordal cover of $G$? In the following theorem, we show how one can compute the normalizing constant of a graph $G$ that results from removing one edge from a chordal graph. Such graphs are said to have \emph{minimum fill-in equal to} $1$. 

\begin{thm}
\label{thm_general}
Let $\,G=(V,E)$ be an undirected graph with minimum fill-in $1$ and with vertices $V=\{1,\dots ,p\}$. Let $G^e=(V,E^e)$ denote the graph $G$ with one additional edge $e$, i.e., $E^e = E \cup \{e\}$, such that $G^e$ is chordal. Let $d$ denote the number of triangles formed by the edge $e$ and two other edges in $G^e$. Then
$$
I_{G}(\delta,\mathbb{I}_p) \;=\; \pi^{-1/2}\;\frac{\Gamma\big(\delta+\frac12(d+ 2)\big)}{\Gamma\big(\delta+\frac12(d+3)\big)} \; I_{G^e}(\delta,\mathbb{I}_p).
$$
\end{thm}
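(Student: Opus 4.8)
The plan is to express both $I_G(\delta,\mathbb{I}_p)$ and $I_{G^e}(\delta,\mathbb{I}_p)$ as Cholesky-factor integrals via Theorem~\ref{thm_int}, and then to relate the two integrals by analyzing how the single added edge $e$ changes the combinatorial data entering that formula. Write $e=(a,b)$ with $a<b$ in the chosen vertex ordering. First I would compare the moral DAGs $\bar{\mathcal{G}}$ of $G$ and $\bar{\mathcal{G}^e}$ of $G^e$: adding $e$ to $G$ adds the oriented edge $(a,b)$, bumps $\mathrm{indeg}(b)$ by one, and — because $e$ lies in $d$ triangles of $G^e$ — forces the marrying step to add exactly the same set of fill-in edges as before (this is where I would need to argue carefully; see below), so that $A_*$ for $G^e$ is $A_*$ for $G$ together with the new free variable $a_{ab}$, and $\bar{\mathcal{E}}$ is unchanged except that $(a,b)$ has moved from a determined entry to a free one. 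Concretely, in $\bar{\mathcal{G}}$ (the moral DAG of $G$) the entry $a_{ab}$ was the determined quantity $a_{ab} = a_{bb}^{-1/2}\sum_{l:(a,l),(b,l)\in\bar{\mathcal{E}}} a_{al}a_{bl}$, and the sum over colliders $l$ has exactly $d$ terms; in $\bar{\mathcal{G}^e}$ the same $a_{ab}$ becomes an unconstrained real variable.

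Next I would substitute into the formula of Theorem~\ref{thm_int}. For $I_{G^e}$, the integrand picks up an extra factor $a_{bb}^{1/2}$ from the increased indegree of $b$, an extra Gaussian factor $\exp(-a_{ab}^2)$, and an extra differential $da_{ab}$. For $I_G$, the exponent $\sum_{(i,j)\in\bar{\mathcal{E}}} a_{ij}^2$ contains the term $a_{ab}^2$ with $a_{ab}$ set equal to its collider expression. The strategy is to start from the $I_G$ integral and reverse-engineer the $I_{G^e}$ integral by inserting a Gaussian integral in a new variable: write
$$
1 \;=\; \pi^{-1/2}\int_{-\infty}^{\infty} \exp\bigl(-t^2\bigr)\,dt,
$$
and inside the $I_G$ integrand make the change of variable $t = a_{ab} - a_{bb}^{-1/2}\sum_l a_{al}a_{bl}$ — but it is cleaner to go the other way: in the $I_{G^e}$ integral, hold all variables except $a_{ab}$ fixed and carry out the single Gaussian integral over $a_{ab}\in\R$ of $\exp(-a_{ab}^2)$ against the $a_{bb}$-power, completing the square against the $a_{ab}^2$ terms that also appear elsewhere. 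Here is the subtlety that makes the theorem nontrivial: $a_{ab}$ does \emph{not} appear only in $\exp(-a_{ab}^2)$; because $(a,b)\in\bar{\mathcal{E}}$, in the sibling graphs $a_{ab}$ can appear inside the collider expressions for \emph{other} fill-in edges and inside $k_{ij}$ relations, so integrating it out is not a bare Gaussian integral but a Gaussian integral whose mean depends on the other $a$'s. I would isolate all occurrences of $a_{ab}$: they are $\exp(-a_{ab}^2)$ (moral edge term), possibly $a_{ab}$ linearly inside finitely many other determined-entry expressions, and the factor $a_{bb}^{1/2}$ is $a_{ab}$-free. Collecting, the $a_{ab}$-integral has the schematic form $\int_\R a_{bb}^{\delta+\frac12 \mathrm{indeg}_{G^e}(b)} \exp(-a_{ab}^2 - (\text{quadratic in the other vars involving } a_{ab}))\,da_{ab}$, which after completing the square produces a Gaussian normalization times a residual factor; rescaling the $d$ collider variables $a_{al}$ (or $a_{bl}$) by $a_{bb}^{1/2}$ as in the $G_5$ computation turns the residual into the elementary integral $\int_{\R^d}(1+\|v\|^2)^{-\rho}\,dv$ with $\rho = \delta + \frac12(d+3)$ (or similar), which by formula~(\ref{lemma_comp}) equals $\pi^{d/2}\Gamma(\delta+\frac12(d+3) - \frac12 d)/\Gamma(\delta+\frac12(d+3)) = \pi^{d/2}\Gamma(\delta+\frac32)/\Gamma(\delta+\frac12(d+3))$ — and tracking the $\pi$-powers and the $\Gamma$-factor against what Theorem~\ref{thm_int} would directly give for $I_G$ yields the stated ratio $\pi^{-1/2}\Gamma(\delta+\frac12(d+2))/\Gamma(\delta+\frac12(d+3))$.

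The main obstacle is precisely the bookkeeping in the previous paragraph: verifying that adding the single edge $e$ to $G$ changes the moral chordal cover in the cleanest possible way — namely, that $\bar{E^e}\setminus\{e\}$ equals $\bar E\setminus\{e\}$ after accounting for the reclassification of $e$ itself, and that the $d$ triangles through $e$ in $G^e$ are exactly the collider nodes $l$ with $(a,l),(b,l)\in\bar{\mathcal{E}}$ relevant to the $a_{ab}$-relation, so that the exponent of $a_{bb}$ one gains from $\mathrm{indeg}(b)$ matches the exponent of $a_{bb}$ one loses when eliminating $a_{ab}$ from the collider denominators. Once the combinatorics is pinned down, the analytic core is a single Gaussian integral plus one application of (\ref{lemma_comp}), and the absolute convergence claims carry over from the convergence already established via Theorem~\ref{thm_int}. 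A safe way to organize the write-up is to first prove the identity in the special case where $\bar{\mathcal{G}^e}=\mathcal{G}^e$ (i.e., $G^e$ chordal), where Proposition~\ref{prop_chordal} applies directly and the claim reduces to an elementary ratio of the explicit product formulas, and then bootstrap to the general case by noting that both sides transform identically under a further edge addition.
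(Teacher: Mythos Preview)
Your proposal uses the right framework—Theorem~\ref{thm_int} applied to both graphs—but it misses the single idea that makes the paper's argument work, and the analytic route you sketch in its place does not go through.

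The paper's proof rests entirely on a \emph{choice of vertex labeling}: label the two endpoints of $e$ as $1$ and $d+2$, label the $d$ vertices forming triangles with $e$ as $2,\dots,d+1$, and put the remaining vertices last. The point of this labeling is to force the three variables $a_{11}$, $a_{d+2,d+2}$, and $a_{1,d+2}$ to decouple from the rest of the integrand: nothing points to vertex $1$; every possible edge into vertex $d+2$ from $\{1,\dots,d+1\}$ is already in $E^e$, so no fill-in equation carries $a_{d+2,d+2}$ in its denominator; and $a_{1,d+2}$ enters no other fill-in formula because any fill-in $(1,j)$ with collider $d+2$ would force $j\in\{2,\dots,d+1\}$, where $(1,j)\in E$ already. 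The comparison of $I_{G^e}$ and $I_G$ then collapses to three one-variable integrals—a Gaussian $\int e^{-a_{1,d+2}^2}\,da_{1,d+2}=\sqrt{\pi}$ and two gamma integrals in $a_{11}$ and $a_{d+2,d+2}$, the latter pair differing in exponent by exactly $\tfrac12$ since $\mathrm{indeg}^e(d+2)=d+1=\mathrm{indeg}(d+2)+1$—against an identical ``rest'' integral. There is no completing the square, no rescaling of collider variables, and no use of~(\ref{lemma_comp}).

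You never fix such an ordering, and without it your intermediate claims are unverified or false. Your assertion that the collider sum for $a_{ab}$ has ``exactly $d$ terms'' requires the triangle vertices to lie above $b$ in the ordering and no further colliders to appear under iterated moral closure; neither is automatic. Your worry that $a_{ab}$ ``can appear inside the collider expressions for other fill-in edges'' is a genuine obstruction under a generic ordering and is precisely what the paper's labeling is engineered to kill. The ``complete the square, rescale, apply~(\ref{lemma_comp})'' route mimics the ad hoc $G_5$ calculation in Section~\ref{sec_example}, presented there as the messier alternative that motivates the general theorem; the asserted reduction to $\int_{\R^d}(1+\|v\|^2)^{-\rho}\,dv$ is not justified for general $G$. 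Finally, the bootstrap you propose fails: adding a further edge $e'$ can create new triangles through $e$, so the triangle count for $e$ in $G^e\cup\{e'\}$ need not equal $d$, and the two sides of the claimed identity do not transform identically under a further edge addition.
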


\begin{proof}
We begin by defining an ordering of the vertices in such a way that one can directly integrate out the variables corresponding to the end points of $e$ and the variable corresponding to $e$ itself. 

Let one of the end points of $e$ be labelled as `$1$', the other end point as `$d+2$' and label the $d$ vertices involved in triangles over the edge $e$ by $2,\ldots ,d+1$. Label all remaining vertices by $d+3,\dots ,p$. Let $\bar{\mathcal{G}}^e$ denote the moral DAG to $G^e$ with edge set $\bar{\mathcal{E}}^e$. Then the chosen ordering of the vertices guarantees that $\bar{\mathcal{E}}^e = \bar{\mathcal{E}} \cup \{e\}$, and $e\notin\bar{\mathcal{E}}$. 

Also, since all vertices $2,\dots ,d+1$ are connected to vertex $d+2$, no added edge in $\bar{\mathcal{E}}\setminus\mathcal{E}$ points to vertex $d+2$ and hence $a_{d+2,d+2}$ does not appear in any equation for the edges in $\bar{\mathcal{E}}\setminus\mathcal{E}$. Similar arguments hold for vertex $1$, since due to the ordering there can be no edge pointing to node $1$.  

Let $A$ and $A^e$ denote the Cholesky factors of $G$ and $G^e$, respectively. Then 
$$
A_{ij}\;=\; \begin{cases}
A^e_{ij} & \textrm{ for all } (i,j)\neq (1,d+2) \\
0 & \textrm{ if } (i,j)= (1,d+2)
\end{cases}
$$
Let $\textrm{indeg}$ denote the indegree with respect to the DAG $\mathcal{G}$ and $\textrm{indeg}^e$ the indegree with respect to the DAG $\mathcal{G}^e$. Let $A_* = ((a_{ii})_{i\notin\{1,d+2\}}, (a_{ij})_{(i,j)\in\mathcal{E}})$. Note that 
\begin{equation}
\label{indegvalues}
\textrm{indeg}^e(1) = 0 = \textrm{indeg}(1), \quad \textrm{indeg}^e(d+2) = d+1 = \textrm{indeg}(d+2) + 1.
\end{equation}
Then by Theorem \ref{thm_int_D},
\begin{align*}
I_{G^e}(\delta,\mathbb{I}_p) =\ & \int \;\bigg(\prod_{i=1}^p a_{ii}^{\delta+\frac12\;\textrm{indeg}^e(i)} \exp(-a_{ii})\bigg) \; \exp\bigg[- \sum_{(i,j)\in\bar{\mathcal{E}}^e} a_{ij}^2 \bigg] \\
& \qquad\qquad\qquad\qquad\qquad\qquad\qquad\quad \dd a_{11} \dd a_{d+2,d+2} \dd a_{1,d+2} \dd A_* \\
 =\ &  \int_{-\infty}^{\infty} \; \exp(-a_{1,d+2}^2) \dd a_{1,d+2} \cdot \int_0^{\infty} \; a_{11}^{\delta+\frac12\;\textrm{indeg}^e(1)} \; \exp(-a_{11}) \dd a_{11} \\ 
 & \cdot \int_0^{\infty} \; a_{d+2,d+2}^{\ \delta+\frac12\;\textrm{indeg}^e(d+2)\phantom{\big(}} \; \exp(-a_{d+2,d+2}) \dd a_{d+2,d+2} \\
  & \cdot \int_{A_*} \; \bigg[\prod_{i\notin\{1,d+2\}} a_{ii}^{\delta+\frac12\;\textrm{indeg}^e(i)} \exp(-a_{ii})\bigg] \exp\bigg[- \sum_{(i,j)\in\bar{\mathcal{E}}} a_{ij}^2 \bigg] \dd A_* .
\end{align*}
The integral with respect to $a_{1,d+2}$ is a Gaussian integral, with value $\sqrt{\pi}$.  Also, by (\ref{indegvalues}), 
$$
\int_0^{\infty} \; a_{11}^{\delta+\frac12\textrm{indeg}^e(1)} \; \exp(-a_{11}) \dd a_{11} = \int_0^{\infty} \; a_{11}^{\delta+\frac12\textrm{indeg}(1)} \; \exp(-a_{11}) \dd a_{11}.
$$
Again by (\ref{indegvalues}), we have 
\begin{align*}
\int_0^{\infty} & a_{d+2,d+2}^{\delta+\frac12\;\textrm{indeg}^e(d+2)\phantom{\big(}} \; \exp(-a_{d+2,d+2}) \dd a_{d+2,d+2} \\
& = \frac{\Gamma\big(\delta + \tfrac12(d+1)+1\big)}{\Gamma(\delta + \tfrac12 d +1)} \int_0^{\infty} \; a_{d+2,d+2}^{\delta+\frac12\;\textrm{indeg}(d+2)\phantom{\big(}} \; \exp(-a_{d+2,d+2}) \dd a_{d+2,d+2}.
\end{align*}
Finally, since $\textrm{indeg}^e(i) = \textrm{indeg}(i)$ for all $i\notin\{1,d+2\}$, we obtain 
\begin{align*}
I_{G^e}(\delta,\mathbb{I}_p) 
\;=\; \ & \sqrt{\pi}\; \frac{\Gamma\big(\delta + \tfrac12(d+1)+1\big)}{\Gamma(\delta + \tfrac12 d +1)}
 \int_0^{\infty} \; a_{11}^{\delta+\textrm{indeg}(1)/2} \; \exp(-a_{11}) \dd a_{11} \\
 & \cdot \int_0^{\infty} \; a_{d+2,d+2}^{\delta+\frac12\;\textrm{indeg}(d+2)\phantom{\big(}} \; \exp(-a_{d+2,d+2}) \dd a_{d+2,d+2} \\
 & \cdot \int_{A_*} \; \bigg(\prod_{i\notin\{1,d+2\}} a_{ii}^{\delta+\frac12\;\textrm{indeg}(i)}\exp(-a_{ii})\bigg) \; \exp\bigg[- \sum_{(i,j)\in\bar{\mathcal{E}}} a_{ij}^2 \bigg] \dd A_* \\
\;=\; \ & \sqrt{\pi}\;\; \frac{\Gamma\big(\delta + \frac12(d+3)\big)}{\Gamma\big(\delta + \frac12(d +2)\big)}\; I_{G}(\delta,\mathbb{I}_p).
\end{align*}
The proof now is complete.  
\end{proof}

\begin{example}
\label{G5exampleI}
Since the graph $G_5$ discussed in Example~\ref{G5example} has minimum fill-in equal to 1, we can apply Theorem~\ref{thm_general} to compute its normalizing constant. The skeleton of the graph shown in Figure \ref{fig_graph_5} (right) is a chordal cover of $G_5$ and the given vertex labeling is a perfect labeling. By applying Proposition \ref{prop_chordal}, we deduce the normalizing constant for the graph $G_5$ with the additional edge $e=(1,3)$:
$$
I_{G_5^e}(\delta,\mathbb{I}_p) \;=\; \pi^4\; \Gamma(\delta+1)\; \Gamma\big(\delta+\tfrac32\big) \;\big[\Gamma(\delta+2)\big]^2\; \Gamma\big(\delta+\tfrac52\big).
$$
Since the number of triangles over the red edge $(1,3)$ is $d=3$, we find by Theorem \ref{thm_general} that
\begin{eqnarray}
\qquad\, I_{G_5}(\delta,\mathbb{I}_p) &=& \pi^{-1/2} \; \frac{\Gamma(\delta + \frac32 + 1) }{\Gamma(\delta + \frac42 +1)} \; I_{G_5^e}(\delta,\mathbb{I}_p)\nonumber\\ 
&=& \pi^{7/2}\;\frac{\Gamma\big(\delta + \frac52\big)}{\Gamma(\delta + 3)}\;\Gamma(\delta + 1) \; \Gamma\big(\delta+\tfrac32\big) \;\big[\Gamma(\delta+2)\big]^2\;\Gamma\big(\delta + \tfrac52\big). \label{ex_I}
\end{eqnarray}
\end{example}

\smallskip

\section{Computing \texorpdfstring{$\boldsymbol{I_{G}(\delta, D)}$}{} for general non-chordal graphs}
\label{sec_D}
\setcounter{equation}{0}

In this section, we study $I_{G}(\delta, D)$ for general $D$. In Theorem~\ref{real_thm} we show how the normalizing constant changes when removing not only an edge, but an entire clique (i.e.,~a completely connected subgraph) from a graph. This leads to an algorithm for computing the normalizing constant $I_{G}(\delta, D)$ for any graph $G$, which can then be specialized to the case which $D=\mathbb{I}_p$. For general graphs, we found it necessary to calculate first the general case $I_{G}(\delta, D)$ and then to specialize to $D=\mathbb{I}_p$, as is done for moment-generating functions or Laplace transforms.

\subsection{Some results on a generalized hypergeometric function of matrix argument}

We list in this subsection some results, involving a generalized hypergeometric function of matrix argument, that we will apply repeatedly in this section.  

For $a \in \C$ and $k\in\{0,1,2,\ldots\}$, we denote the {\it rising factorial} by 
$$
(a)_k = \frac{\Gamma(a+k)}{\Gamma(a)} = a(a+1)(a+2)\cdots(a+k-1).
$$
For $t \in \C$ and $\rho \not\in \{0,-1,-2,\ldots\}$ the {\it classical generalized hypergeometric function}, ${}_0F_1(\rho,t)$, may be defined by the series expansion, 
\begin{equation}
\label{classical0F1}
{}_0 F_1(\rho;\; t) = \sum_{l=0}^\infty \frac{ t^l}{l!\; (\rho)_l}.
\end{equation}
We refer to Andrews, et al.~\cite{AndrewsAskeyRoy2000} for many other properties of this function.  

The {\it generalized hypergeometric function of matrix argument}, ${}_0 F_1(\rho;\; Y)$, $Y \in {\mathbb{S}^p_{\succ 0}}$, is defined by the Laplace transform, 
$$
\frac{1}{\Gamma_{p}(\rho)} \int_{\mathbb{S}^p_{\succ 0}} |Y|^{\rho-\frac12(p+1)} \exp(-\tr(YD))\; {}_0 F_1(\rho;\; Y) \dd Y = |D|^{-\rho} \; \exp(\tr(D^{-1})),
$$
valid for ${\rm{Re}}(\rho) > \tfrac12(p-1)$ and $D \in \mathbb{S}^p_{\succ 0}$.  Herz \cite{Herz1955} provided an extensive theory of the analytic properties of the function ${}_0F_1$.  In particular, ${}_0 F_1(\rho;\; Y)$ is simultaneously analytic in $\rho$ for $\hbox{Re}(\rho) > \tfrac12(p-1)$ and entire in $Y$; so, as a function of $Y$, its domain of definition extends to the set $\mathbb{S}^p$ and to the set of of complex symmetric matrices.  Other properties of the function ${}_0 F_1$, such as zonal polynomial expansions which generalize (\ref{classical0F1}), are given by James \cite{James1964}, Muirhead \cite{Muirhead1982}, and Gross and Richards \cite{Gross-Richards1987}. 

Herz \cite[p.~497]{Herz1955} proved that the function ${}_0 F_1(\rho;\; Y)$ depends only on the eigenvalues of $Y$, and  moreover that if ${\rm{Re}}(\rho) > \tfrac12(p-1)$, $D\in\mathbb{S}^p_{\succ 0}$, and $C \in \mathbb{S}^p$, then there holds the Laplace transform formula, 
\begin{multline}
\label{james1}
\int_{\mathbb{S}^p_{\succ 0}} |Y|^{\rho-\frac12(p+1)} \exp(-\tr(YD))\; {}_0 F_1(\rho;\; YC) \dd Y \\
= \Gamma_{p}(\rho) \; |D|^{-\rho} \; \exp(\tr(D^{-1}C)),
\end{multline}
where, by convention, ${}_0 F_1(\rho;\; YC)$ is an abbreviation for ${}_0 F_1(\rho;\; Y^{1/2}CY^{1/2})$ and $Y^{1/2} \in \mathbb{S}^p_{\succ 0}$ is the unique square-root of $Y$.  Setting $C = 0$ (the zero matrix) in (\ref{james1}) we deduce from the uniqueness of the Laplace transform and (\ref{Siegel}) that ${}_0 F_1(\rho;\; 0) = 1$.  

We will apply repeatedly a generalization of the Poisson integral to matrix spaces (see \cite[pp. 495--496]{Herz1955} and \cite[Equation (151)]{James1964}):  If $A$ is a $k\times p$ matrix such that $k \le p$, and $\hbox{Re}(\rho) > \tfrac12(k+p-1)$, then 
\begin{multline}
\label{james2}
\int_{0 < XX^T < \mathbb{I}_k} |\mathbb{I}_k - XX^T|^{\rho-\frac12(k+p+1)} \exp(\tr(AX^T)) \dd X \\
= \frac{\pi^{kp/2}\; \Gamma_k\left(\rho-\tfrac12 p\right)}{\Gamma_{k}(\rho)} \; {}_0 F_1\left(\rho;\; \tfrac{1}{4}AA^T\right),
\end{multline}
where the region of integration is the set of all $k \times p$ matrices $X$ such that $XX^T \in \mathbb{S}^k_{\succ 0}$ and $I - XX^T \in \mathbb{S}^k_{\succ 0}$.  In particular, on setting $A = 0$ we obtain 
\begin{equation}
\label{Thm_3.1}
\int_{0 < XX^T < \mathbb{I}_k} |\mathbb{I}_k - XX^T|^{\rho-\frac12(k+p+1)} \dd X = \frac{\pi^{kp/2}\; \Gamma_k\left(\rho-\tfrac12 p\right)}{\Gamma_{k}(\rho)},
\end{equation}
a result which was used in Proposition \ref{prop_example_bipartite}.

For the case in which $Y$ is a $2 \times 2$ matrix, Muirhead \cite{Muirhead1975} proved that 
\begin{eqnarray}
\label{muirhead}
{}_0 F_1(\rho;\; Y) &=& \sum_{q=0}^\infty \;\frac{1}{q!\;(\rho)_{2q}\;\big(\rho-\frac12\big)_q}\; |Y|^q \;{}_0 F_1(\rho+2q;\; {\rm{tr}}(Y)),
\end{eqnarray}
where the ${}_0F_1$ functions on the right-hand side are the classical generalized hypergeometric functions given in (\ref{classical0F1}).  In the special case in which $Y$ is of rank $1$, it follows from Herz \cite[p.~497]{Herz1955}, or directly from (\ref{muirhead}), that 
\begin{equation}
\label{muirhead2}
{}_0 F_1(\rho;\; Y) = {}_0 F_1(\rho;\; {\rm{tr}}(Y)).
\end{equation}

\vspace{0.1cm}

\subsection{The normalizing constant for non-chordal graphs}

We want to calculate 
$$
I_{G}(\delta, D) = \int_{\mathbb{S}^p_{\succ 0}(G)} |K|^{\delta}\exp(-\tr(KD)) \dd K,
$$
the normalizing constant for $G$, a general non-chordal graph.  By making the change of variables $K\to \diag(D)^{-1/2} K\, \diag(D)^{-1/2}$ we can assume, without loss of generality, that $D$ has ones on the diagonal and therefore is a correlation matrix; this assumption will be maintained explicitly for the remainder of the paper.  

In the sequel, we will encounter a $2 \times m$ matrix $C = (C_{ij})$, and then we use the notation $|C_{\{1,2\},\{i,j\}}|$ for the minor corresponding to rows $1$ and $2$ and to columns $i$ and $j$, where $i,j\in\{1,\ldots,m\}$.  We will need $L = (L_{ij})$, a $2\times m$ matrix of non-negative integers such that $\sum_{i=1}^2\sum_{j=1}^m L_{ij} = l$, and we adopt the notation 
$$
\binom{l}{L}= \frac{l!}{\prod_{i=1}^2\prod_{j=1}^m L_{ij}!}, 
\quad
L_{i+} = \sum_{j=1}^m L_{ij}, 
\quad\hbox{and}\quad 
L_{+j} = \sum_{i=1}^2 L_{ij}.
$$
We will also have $Q=(Q_{ij})_{1 \le i < j \le m}$\,, a vector of non-negative integers such that $\sum_{1 \le i < j \le m} Q_{ij} = q$, and we set 
$$
\binom{q}{Q}= \frac{q!}{\prod_{1 \le i < j \le m} Q_{ij}!}, 
\quad
Q_{i+} = \sum_{j=i+1}^m Q_{ij}, 
\quad\hbox{and}\quad 
Q_{+j} = \sum_{i=1}^{j-1} Q_{ij}.
$$

In the following result, we obtain the normalizing constant for $H_{2,m}$, a complete bipartite graph on $2+m$ vertices.  

\begin{prop}
\label{prop_bipartite_D}
The integral $I_{H_{2,m}}(\delta, D)$ converges absolutely for all $\delta> -1$ and $D\in\mathbb{S}^{2+m}_{\succ 0}$. Let $C = (C_{ij})$ denote the $2\times m$ submatrix of $D$ corresponding to the edges in $G$; then $I_{H_{2,m}}(\delta,D)$ equals 
\vspace{0.1cm}
\begin{align*}
I_{H_{2,m}}&(\delta,\mathbb{I}_{m+2}) \\ 
\cdot & \sum_{q=0}^{\infty}\; \frac{\big(\delta + \frac12(m+2)\big)_q\left[(\delta+2)_q\right]^m}{q!\;\big(\delta+\frac12(m+3)\big)_{2q}} \; \sum_{l=0}^{\infty}\; \frac{1}{l!\;\big(\delta+2q+\frac12(m+3)\big)_{l}} \\
\cdot & \sum_{L} \binom{l}{L} \bigg(\prod_{i=1}^2\prod_{j=1}^m C_{ij}^{L_{ij}}\bigg) \!
\bigg(\prod_{i=1}^2 \big(\delta+q+\tfrac12(m+2)\big)_{L_{i+}}\bigg)\!\bigg(\prod_{j=1}^m (\delta+2)_{L_{+j}}\bigg)\\
\cdot & \sum_{Q} \binom{q}{Q} \bigg(\prod_{1 \le i<j m} |C_{\{1,2\},\{i,j\}}|^{2Q_{ij}}\bigg) 
\bigg(\prod_{j=1}^m \big(\delta+L_{+j}+2\big)_{Q_{j+}+Q_{+j}}\bigg),
\end{align*}
with 
\begin{equation}
\label{IH2mId}
I_{H_{2,m}}(\delta, \mathbb{I}_{m+2}) = \frac{\pi^{m}\;\Gamma_{2}\left(\delta+\frac{3}{2}\right)}{\Gamma_2\left(\delta+\frac12(m+3)\right)}\; \big[\Gamma(\delta+2)\big]^m\; \Gamma\left(\delta + \tfrac12(m+2)\right)^2.
\end{equation}
\end{prop}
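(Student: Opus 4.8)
The plan is to follow the Cholesky-factor strategy of Section \ref{sec_example} and Theorem \ref{thm_int}, but now carrying along the matrix $D$ (assumed to be a correlation matrix) in the trace term, which introduces cross-terms that prevent the integrals from separating completely. Writing $K = AA^T$ with $A$ the upper-triangular Cholesky factor parametrized as in (\ref{par_A}), and labelling the two vertices of degree $m$ in $H_{2,m}$ as $1$ and $2$ (so that the moral DAG adds the single edge $(1,2)$ and produces one equation $a_{12} = a_{12}(a_{1l}, a_{2l}, a_{22})$ involving all the colliders $l = 3, \dots, m+2$), I would expand $\exp(-\tr(KD))$. The diagonal part of $D$ contributes the familiar terms $-\sum_i a_{ii} - \sum_{(i,j)\in\bar{\mathcal E}} a_{ij}^2$ handled in Theorem \ref{thm_int}, while the off-diagonal part contributes $-2\sum_{i<j} D_{ij} k_{ij}$, which must be expanded as a power series. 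The off-diagonal entries $k_{ij}$ of $K$ with $(i,j) \notin E$ are zero, so only the entries $C_{ij}$ (the $2 \times m$ submatrix of $D$ along the edges) survive, together with the $D$-entries among the colliders — and here is where the $2\times 2$ structure of the "$A$-block" at vertices $1,2$ enters.

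The key steps, in order: (i) perform the change of variables $K = AA^T$, using Theorem \ref{thm_int} to reduce the $\mathbb{I}$-case integrand and to identify the single constraint equation for $a_{12}$ and the Jacobian $\prod a_{ii}^{{\rm indeg}(i)/2}$; (ii) integrate out the diagonal variables $a_{ii}$ and the non-constrained off-diagonal variables, producing gamma and Gaussian factors, exactly as in Proposition \ref{prop_chordal}, but now with the series expansion of $\exp(2\sum C_{ij} k_{ij} + 2\sum D_{jj'} k_{jj'})$ present; (iii) recognize the remaining integral over the collider variables $a_{1l}, a_{2l}$ ($l=3,\dots,m+2$) and $a_{22}$ as a Gaussian-type integral whose quadratic form is governed by the $2\times 2$ matrix $K_{AA} = \begin{pmatrix} k_{11} & k_{12} \\ k_{12} & k_{22} \end{pmatrix}$; evaluating the Gaussian moments of products of linear forms in $a_{1l}, a_{2l}$ produces the multinomial sums over $L$ (tracking which power of each $C_{ij}$ appears) and over $Q$ (tracking the $2\times 2$ minors $|C_{\{1,2\},\{i,j\}}|$, which arise from the determinant $|K_{AA}|$-type factors and the cross-moments $\mathbb{E}[a_{1l}a_{2l'} \cdots]$); (iv) the internal sum over $q$, carrying the ${}_0F_1$-type structure, comes from Muirhead's $2\times 2$ expansion (\ref{muirhead}) of ${}_0F_1(\rho; Y)$ for the $2\times 2$ matrix $Y$ built from $C$, while the sum over $l$ comes from (\ref{james1})--(\ref{james2}) applied to the rank-controlled pieces; (v) assemble all gamma-function and rising-factorial factors, factoring out $I_{H_{2,m}}(\delta,\mathbb{I}_{m+2})$, and identify that prefactor via (\ref{IH2mId}) using Proposition \ref{prop_example_bipartite} with $n = 2$ (equivalently $\Gamma_{2+m}/(\Gamma_m \Gamma_2)$ rewritten through the duplication-type identities for $\Gamma_p$).

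The main obstacle I expect is step (iii)–(iv): correctly bookkeeping the Gaussian moments. After integrating out $a_{11}, a_{22}$ and the Gaussian edge-variables not tied to the $(1,2)$ constraint, one is left with an integral over the $2m$ variables $(a_{1l}, a_{2l})_{l=3}^{m+2}$ (and $a_{33},\dots$ if one rescales), against $\exp(-\|\cdot\|^2)$ times the expanded exponential $\exp\bigl(\text{linear and bilinear terms in }a_{1l}, a_{2l}\text{ weighted by }C_{ij}\text{ and the }2\times2\text{ minors}\bigr)$; the bilinear terms (coming from $k_{12} = -a_{12}\sqrt{a_{22}} + \sum a_{1l}a_{2l}$ substituted via the constraint, paired with $C_{12}$ or with collider $D$-entries) are what generate the $|C_{\{1,2\},\{i,j\}}|$ minors and the coupled double sum. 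Matching the exact form of the rising factorials $(\delta+q+\tfrac12(m+2))_{L_{i+}}$, $(\delta+2)_{L_{+j}}$, and $(\delta+L_{+j}+2)_{Q_{j+}+Q_{+j}}$ — i.e.\ verifying that the gamma integrals over $a_{ll}$ produce precisely these shifted arguments once the $L$- and $Q$-powers are attached — is the delicate combinatorial heart of the proof, and is where I would spend the most care; the convergence claim for $\delta > -1$ follows at the end by dominating the series termwise, since each inner sum is a (partial) hypergeometric-type series in the bounded entries of the correlation matrix $D$.
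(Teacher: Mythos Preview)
Your plan diverges from the paper's proof at the outset, and the gap shows up in step (iv). The paper does \emph{not} use the Cholesky factorization here. It works directly with the block form
\[
K = \begin{pmatrix} K_{AA} & K_{AB} \\ K_{AB}^T & K_{BB}\end{pmatrix},\qquad K_{AA}=\mathrm{diag}(\kappa_1,\kappa_2),\quad K_{BB}=\mathrm{diag}(k_1,\dots,k_m),
\]
applies the substitution $K_{AB}\to K_{AA}^{1/2}K_{AB}K_{BB}^{1/2}$, and integrates out $K_{AB}$ via the matrix Poisson integral (\ref{james2}). That step is what \emph{produces} the function ${}_0F_1\big(\delta+\tfrac12(m+3);\,K_{AA}\,C\,K_{BB}\,C^T\big)$. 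Muirhead's identity (\ref{muirhead}) is then applied to this ${}_0F_1$ of a $2\times 2$ argument, yielding the $q$- and $l$-sums; Binet--Cauchy and the Multinomial Theorem expand $|K_{AA}CK_{BB}C^T|^q$ and $\big(\mathrm{tr}(K_{AA}CK_{BB}C^T)\big)^l$ into the $Q$- and $L$-sums; the remaining integrals over $\kappa_1,\kappa_2,k_1,\dots,k_m$ are one-dimensional gamma integrals, and collecting their values gives the rising factorials in the statement.

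In your Cholesky route no ${}_0F_1$ ever appears, so invoking (\ref{muirhead}) and (\ref{james1})--(\ref{james2}) in step (iv) is unjustified as written. With the labeling you chose, the single constraint $a_{12}=a_{22}^{-1/2}\sum_{l=3}^{m+2} a_{1l}a_{2l}$ injects the quartic term $a_{22}^{-1}\big(\sum_l a_{1l}a_{2l}\big)^2$ into the exponent, while the $D$-cross-terms $2C_{ij}a_{ij}\sqrt{a_{jj}}$ are merely linear in the edge variables (so ``expand as a power series'' is not the right move for those---they complete the square). To reach the stated formula from here you would have to compute, by hand, all mixed Gaussian moments of the $2m$ variables $(a_{1l},a_{2l})$ against that quartic weight and then reorganize the outcome into the precise rising-factorial pattern $(\delta+q+\tfrac12(m+2))_{L_{i+}}$, $(\delta+2)_{L_{+j}}$, $(\delta+L_{+j}+2)_{Q_{j+}+Q_{+j}}$. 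That combinatorial identity is exactly what the Poisson-integral/Muirhead machinery packages for free; your outline asserts the result of that packaging without supplying the mechanism, so as it stands step (iii)--(iv) is a genuine gap rather than a different proof.
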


\begin{proof}
We order the vertices such that 
$$
K = \begin{pmatrix}
K_{AA} & K_{AB} \\ K_{BA} & K_{BB}
\end{pmatrix},
$$
where $K_{AA} = \diag(\kappa_1,\kappa_2)$, $K_{BB} = \diag(k_1,\dots ,k_m)$, and $K_{AB}$ is unconstrained. We partition $D$ in a similar way, 
$$
D = \begin{pmatrix}
D_{AA} & D_{AB} \\ D_{BA} & D_{BB}
\end{pmatrix},
$$
where $\diag(D) = (1,\ldots,1)$ and $D_{AB}=C$. By applying the determinant formula for block matrices and making a change of variables to replace $K_{AB}$ by $K_{AA}^{1/2}K_{AB}K_{BB}^{1/2}$, we obtain similarly as in the proof of Proposition~\ref{prop_example_bipartite}:
\begin{eqnarray*}
I_{H_{2,m}}(\delta, D) &=& \int_{\mathbb{S}^{2+m}_{\succ 0}(G)} |K|^{\delta}\exp(-\tr(KD)) \dd K\\
&=& \int_{\mathbb{S}^{2+m}_{\succ 0}(G)} |K_{AA}|^{\delta+\frac12 m}\; |K_{BB}|^{\delta + 1}\; |\mathbb{I}_{m}-K_{AB}^TK_{AB}|^{\delta} \\
&& \quad\quad \cdot \exp(-\tr(K_{AA}) - \tr(K_{BB}))\\
&& \quad\quad \cdot\exp\left[-2\tr\left(K_{AA}^{1/2}K_{AB}K_{BB}^{1/2}C^T\right)\right] \dd K_{AA} \dd K_{AB} \dd K_{BB}.
\end{eqnarray*}
Applying (\ref{james2}) to integrate over $K_{AB}$, we obtain
\begin{align*}
I_{H_{2,m}}(\delta,D) = \; & \frac{\pi^{m}\,\Gamma_2\left(\delta+\frac{3}{2}\right)}{\Gamma_2\left(\delta+\frac12(m+3)\right)} \\
& \quad\cdot \int |K_{AA}|^{\delta+\frac12 m}\; |K_{BB}|^{\delta + 1} \exp(-\tr(K_{AA}) - \tr(K_{BB})) \\
& \qquad \quad\cdot {}_0 F_1\left(\delta+\tfrac12(m+3);\; K_{AA} C K_{BB} C^T\right) \dd K_{AA} \dd K_{BB}.
\end{align*}
Applying (\ref{muirhead}) to expand this ${}_0F_1$ function of matrix argument in terms of a classical ${}_0F_1$ function of $\tr(K_{AA} C K_{BB} C^T)$, and applying (\ref{classical0F1}), we get 
\begin{align*}
{}_0 F_1\big(\delta+\tfrac12&(m+3);\; K_{AA} C K_{BB} C^T\big) \\
= & \sum_{q=0}^{\infty}\; \frac{1}{q!\;\big(\delta+\tfrac12(m+3)\big)_{2q}\;\big(\delta+\frac12(m+2)\big)_{q}} \; |K_{AA} C K_{BB} C^T|^q \; \\ 
& \qquad \cdot \; {}_0 F_1\big(\delta+2q+\tfrac12(m+3);\; \tr(K_{AA} C K_{BB} C^T)\big) \\
= & \sum_{q=0}^{\infty}\; \frac{1}{q!\;\big(\delta+\tfrac12(m+3)\big)_{2q}\;\big(\delta+\frac12(m+2)\big)_{q}} \; |K_{AA} C K_{BB} C^T|^q \\
&\qquad \cdot \; \sum_{l=0}^{\infty} \frac{1}{l!\;\big(\delta+2q+\frac12(m+3)\big)_{l}} \big(\tr(K_{AA} C K_{BB} C^T)\big)^l \; .
\end{align*}

By the Binet-Cauchy formula (see Karlin \cite[p.~1]{Karlin1968}), 
\begin{eqnarray*}
|K_{AA} C K_{BB} C^T| &=& |K_{AA}|\cdot |CK_{BB}C^T|\\
&=& |K_{AA}| \sum_{1 \le i<j \le m} k_i k_j \; |C_{\{1,2\},\{i,j\}}|^2.
\end{eqnarray*}
Hence, by the Multinomial Theorem,
\begin{align*}
|K_{AA} C & K_{BB} C^T|^q \\ &= |K_{AA}|^q\, \sum_{Q} \binom{q}{Q} \prod_{1 \le i<j \le m} \left(k_i k_j \; |C_{\{1,2\},\{i,j\}}|^2\right)^{Q_{ij}}\\
&= |K_{AA}|^q\,\sum_{Q} \binom{q}{Q} \left(\prod_{i=1}^m k_i^{Q_{i+}+Q_{+i}}\right) \left(\prod_{1 \le i<j \le m} |C_{\{1,2\},\{i,j\}}|^{2Q_{ij}}\right),
\end{align*}
where $Q=(Q_{ij})_{1 \le i < j \le m}$ is a vector of non-negative integers, as defined earlier.  
Also, 
$$
\tr(K_{AA} C K_{BB} C^T) = \sum_{i=1}^2\sum_{j=1}^m \kappa_i k_j C_{ij},
$$
and hence, by the Multinomial Theorem, 
\begin{align*}
(\tr(K_{AA} C K_{BB} C^T))^l &=\bigg(\sum_{i=1}^2\sum_{j=1}^m \kappa_i k_j C_{ij}\bigg)^l\\
&= \sum_{L} \binom{l}{L} \prod_{i=1}^2\prod_{j=1}^m \left(\kappa_i k_j C_{ij}\right)^{L_{ij}}\\
&= \sum_{L} \binom{l}{L} \!\left(\prod_{i=1}^2(\kappa_i)^{L_{i+}}\!\right) \!\!\left(\prod_{j=1}^m k_j^{L_{+j}}\!\right)\!\! \left(\prod_{i=1}^2\prod_{j=1}^m(C_{ij})^{L_{ij}}\!\right),
\end{align*}
where $L=(L_{ij})$ is a $2\times m$ non-negative integer matrix defined earlier.~Hence
\begin{align*}
I_{H_{2,m}}(\delta, D) \;&=\;\frac{\pi^{m}\Gamma_2\left(\delta+\frac{3}{2}\right)}{\Gamma_2\big(\delta+\frac12(m+3)\big)}\; \sum_{q=0}^{\infty}\; \frac{1}{q!\;\big(\delta+\frac12(m+3)\big)_{2q}\;\big(\delta+\frac12(m+2)\big)_{q}} \\
&\quad \cdot \sum_{l=0}^{\infty} \frac{1}{l!\;\big(\delta+2q+\frac12(m+3)\big)_{l}}\\
& \quad \cdot \sum_{L} \binom{l}{L} \left(\prod_{i=1}^2\prod_{j=1}^m {C_{ij}}^{L_{ij}}\right) \left(\prod_{i=1}^2 \int_0^\infty \kappa_i^{\delta+q+L_{i+}+\frac12 m}e^{-\kappa_i}\dd \kappa_i\right)\\
& \quad \cdot \sum_{Q} \binom{q}{Q} \left(\prod_{1 \le i<j \le m} |C_{\{1,2\},\{i,j\}}|^{2Q_{ij}}\right) \\
& \qquad\qquad \cdot \left(\prod_{j=1}^m \int_0^\infty k_j^{\delta+Q_{j+}+Q_{+j}+L_{+j}+1}e^{-k_j}\dd k_j\right).
\end{align*}
Evaluating each gamma integral and simplifying the outcomes, we obtain 
\begin{align*}
I_{H_{2,m}}(\delta,D)&=\; \frac{\pi^{m}\;\Gamma_{2}\left(\delta+\frac{3}{2}\right)}{\Gamma_2\left(\delta+\frac12(m+3)\right)}\; \big[\Gamma(\delta+2)\big]^m\; \big[\Gamma\left(\delta + \tfrac12(m+2)\right)\big]^2  \\
\cdot &\sum_{q=0}^{\infty}\; \frac{\left(\delta + \frac12(m+2)\right)_q\left((\delta+2)_q\right)^m}{q!\;\big(\delta+\frac12(m+3)\big)_{2q}}  \;\sum_{l=0}^{\infty}\; \frac{1}{l!\;\big(\delta+2q+\frac12(m+3)\big)_{l}}\\
\cdot &\sum_{L} \!\binom{l}{L}\!\! \bigg(\!\prod_{i=1}^2\prod_{j=1}^m(C_{ij})^{L_{ij}}\!\!\bigg) \!\!\bigg(\!\prod_{i=1}^2\! \left(\delta+q+\tfrac12(m+2)\right)_{L_{i+}}\!\!\!\bigg) \!\!\bigg(\!\prod_{j=1}^m \left(\delta+2\right)_{L_{+j}}\!\!\!\bigg)\\
\cdot &\sum_{Q} \binom{q}{Q} \bigg(\prod_{1 \le i<j \le m} |C_{\{1,2\},\{i,j\}}|^{2Q_{ij}}\bigg) \bigg(\prod_{j=1}^m \left(\delta+L_{+j}+2\right)_{Q_{j+}+Q_{+j}}\bigg).
\end{align*}
Finally, the value of $I_{H_{2,m}}(\delta, \mathbb{I}_{m+2})$ is obtained by applying Theorem~\ref{prop_example_bipartite} or Theorem~\ref{thm_general}, so the proof now is complete.
\end{proof}

Note that if we set $D=\mathbb{I}_{m+2}$ in the proof of Proposition~\ref{prop_bipartite_D} then $|C_{\{1,2\},\{i,j\}}| = C_{ij} = 0$.  Hence, in the infinite series, the only non-zero terms are those for which $l=q=0$, so the series reduces identically to 1.

The special structure of $K$ was crucial for the proof of Proposition~\ref{prop_bipartite_D}. We now combine  Proposition~\ref{prop_bipartite_D} with the approach developed in Theorem~\ref{thm_int_D}, of representing $K$ by its upper Cholesky decomposition, to describe how the normalizing constant changes when removing an edge from a chordal graph with maximal clique size at most $3$. Similarly as in the proof of Theorem~\ref{thm_general}, the main difficulty lies in defining a good ordering of the nodes. For simplifying notation we denote the quotient of the normalizing constants for general $D$ and the identity matrix by $\bar{I}_{G}(\delta, D)$, i.e., 
$$
\bar{I}_{G}(\delta, D) = \frac{I_{G}(\delta,D)}{I_{G}(\delta,\mathbb{I}_p)}.
$$
As an example, note that $\bar{I}_{H_{2,m}}(\delta, D)$ is given in Proposition~\ref{prop_bipartite_D}.

\begin{cor}
\label{cor_tree3}
Let $G=(V,E)$ be an undirected graph of minimum fill-in 1 with vertices $V=\{1,\dots ,p\}$ and maximal clique size at most 3. Let $G^e=(V,E^e)$ denote the graph $G$ with one additional edge $e$, i.e., $E^e=E\cup\{e\}$, such that $G^e$ is chordal and its maximal clique size is also at most 3. Let $d$ denote the number of triangles formed by the edge $e$ and two other edges in $G^e$. Then
$$
I_{G}(\delta,D) \;=\; \pi^{-1/2}\;\frac{\Gamma\left(\delta + \frac12(d+2)\right)}{\Gamma\left(\delta + \frac12(d+3)\right)}\; \frac{|D_{\{1,d+2\}}|^{d-1}}{\prod_{j=2}^{d+1} |D_{\{1,j,d+2\}}|} \;\bar{I}_{H_{2,d}}(\delta, D) \; I_{G^e}(\delta,D),
$$
where $D_{\{i_1,\dots ,i_k\}}$ denotes the principal submatrix of $D$ corresponding to the rows and columns $i_1,\dots ,i_k$.
\end{cor}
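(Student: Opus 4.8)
The plan is to adapt the proof of Theorem~\ref{thm_general} to general $D$, using Theorem~\ref{thm_int_D} in place of Theorem~\ref{thm_int} and Proposition~\ref{prop_bipartite_D} to evaluate the ``local'' integral that the argument produces. First I would fix the vertex labelling exactly as in Theorem~\ref{thm_general}: one endpoint of $e$ is vertex~$1$, the other is vertex~$d+2$, the $d$ vertices lying in a triangle with $e$ are $2,\dots,d+1$, and the remaining vertices are $d+3,\dots,p$. Since $G^e$ has maximal clique size at most~$3$, the vertices $2,\dots,d+1$ are pairwise non-adjacent, so the subgraph of $G$ induced on $S:=\{1,\dots,d+2\}$ is the complete bipartite graph $H_{2,d}$ with parts $\{1,d+2\}$ and $\{2,\dots,d+1\}$, while the subgraph of $G^e$ induced on $S$ is $H_{2,d}$ together with the hub edge~$e$; the latter is chordal, with maximal cliques $\{1,j,d+2\}$, $j=2,\dots,d+1$, all of whose separators equal $\{1,d+2\}$. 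As in Theorem~\ref{thm_general}, this labelling gives $\bar{\mathcal E}^{\,e}=\bar{\mathcal E}\cup\{e\}$ with $e\notin\bar{\mathcal E}$, no edge of $\bar{\mathcal E}^{\,e}$ pointing into vertex~$1$, no moral-only edge pointing into vertex~$d+2$, and $\mathrm{indeg}^{e}(d+2)=\mathrm{indeg}(d+2)+1$ with $\mathrm{indeg}^{e}(i)=\mathrm{indeg}(i)$ otherwise.

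Next I would write $I_{G^e}(\delta,D)$ via Theorem~\ref{thm_int_D} and integrate out the Cholesky variable $a_{1,d+2}$ attached to~$e$. Since $\bar{\mathcal E}^{\,e}=\bar{\mathcal E}\cup\{e\}$ with $e\notin\bar{\mathcal E}$, the vertices $1$ and $d+2$ have no common child in $\bar{\mathcal G}^{\,e}$, so $a_{1,d+2}$ enters the integrand only through $-a_{1,d+2}^2$ and through the $D$-dependent exponent, where $-2d_{1,d+2}K_{1,d+2}$ contributes the linear term $2d_{1,d+2}\sqrt{a_{d+2,d+2}}\,a_{1,d+2}$ and, for each $j=2,\dots,d+1$, $-2d_{1,j}K_{1,j}$ contributes $-2d_{1,j}a_{j,d+2}a_{1,d+2}$ because $d+2$ is a common child of $1$ and~$j$. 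The $a_{1,d+2}$-integral is thus Gaussian; completing the square and integrating over $\R$ yields the factor $\sqrt{\pi}\exp(B^2)$ with $B=d_{1,d+2}\sqrt{a_{d+2,d+2}}-\sum_{j=2}^{d+1}d_{1,j}a_{j,d+2}$, removes $a_{1,d+2}$ and the edge-$e$ summand, and restores each $K_{1,j}$ to its value for~$G$. Combined with $\mathrm{indeg}^{e}(d+2)=\mathrm{indeg}(d+2)+1$, the surviving integrand is precisely the Theorem~\ref{thm_int_D} integrand for $I_G(\delta,D)$ times $\sqrt{\pi}\,a_{d+2,d+2}^{1/2}\exp(B^2)$, a factor that depends only on $a_{d+2,d+2}$, on $a_{2,d+2},\dots,a_{d+1,d+2}$, and on entries of~$D$ indexed by~$S$.

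Then I would argue that, under this labelling, the Cholesky variables attached to the $H_{2,d}$-subgraph on~$S$ (the diagonals $a_{ii}$, $i\in S$, the off-diagonals $a_{1,j}$ and $a_{j,d+2}$, and the moral-only variables among $2,\dots,d+1$) separate in the Theorem~\ref{thm_int_D} integrand from the variables attached to the rest of~$G$; establishing this separation --- using, as in Theorem~\ref{thm_general}, that vertex~$1$ has no incoming edge and vertex~$d+2$ no incoming moral-only edge --- is the main obstacle. Granting it, one may integrate out the $S$-block first: integrating the $S$-part of the $I_G(\delta,D)$-integrand against $\sqrt{\pi}\,a_{d+2,d+2}^{1/2}\exp(B^2)$ produces $I_{H_{2,d}+e}(\delta,D_{SS})$ --- which is exactly what the previous step yields when applied to the pair $(H_{2,d},H_{2,d}+e)$, where no separation is needed --- whereas integrating it without that factor produces $I_{H_{2,d}}(\delta,D_{SS})$, and the contribution of $G$ outside~$S$ is common to both. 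Hence
$$
I_G(\delta,D)\;=\;\frac{I_{H_{2,d}}(\delta,D_{SS})}{I_{H_{2,d}+e}(\delta,D_{SS})}\;I_{G^e}(\delta,D).
$$

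Finally I would evaluate this ratio in closed form. The numerator is given by Proposition~\ref{prop_bipartite_D}, and I would write it as $I_{H_{2,d}}(\delta,D_{SS})=I_{H_{2,d}}(\delta,\mathbb{I}_{d+2})\,\bar I_{H_{2,d}}(\delta,D)$ with $I_{H_{2,d}}(\delta,\mathbb{I}_{d+2})$ taken from \eqref{IH2mId}. Since $H_{2,d}+e$ is chordal with maximal cliques $\{1,j,d+2\}$ and separators $\{1,d+2\}$, \eqref{eq_chordal} and \eqref{Siegel} give
$$
I_{H_{2,d}+e}(\delta,D_{SS})\;=\;\frac{\prod_{j=2}^{d+1}\bigl(|D_{\{1,j,d+2\}}|^{-(\delta+2)}\,\Gamma_3(\delta+2)\bigr)}{\bigl(|D_{\{1,d+2\}}|^{-(\delta+3/2)}\,\Gamma_2(\delta+\tfrac32)\bigr)^{d-1}}.
$$
Substituting these, expanding $\Gamma_2$ and $\Gamma_3$ via \eqref{multivariategamma}, and simplifying the resulting ratio of gamma functions and determinants yields the stated identity; the purely multiplicative constant collapses to $\pi^{-1/2}\,\Gamma(\delta+\tfrac12(d+2))/\Gamma(\delta+\tfrac12(d+3))$, consistently with the $D=\mathbb{I}_p$ case of Theorem~\ref{thm_general}.
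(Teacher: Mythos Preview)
Your proposal is correct and follows essentially the same route as the paper: fix the same vertex labelling, argue that the Cholesky integrand factors into an $S$-block (on $\{1,\dots,d+2\}$) and a remainder common to $G$ and $G^e$, deduce $I_G/I_{G^e}=I_{H_{2,d}}/I_{H_{2,d}^e}$, and then evaluate this ratio via Proposition~\ref{prop_bipartite_D} and the chordal formula~\eqref{eq_chordal}. Your explicit Gaussian integration of $a_{1,d+2}$ is extra detail the paper omits (it asserts the factorisation for $G$ and $G^e$ separately and takes the ratio directly), but it is not wrong and leads to the same place; the paper also invokes Theorem~\ref{thm_general} for the constant $\pi^{-1/2}\Gamma/\Gamma$ rather than expanding $\Gamma_2,\Gamma_3$, which is a cosmetic difference.
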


\begin{proof}
We define an ordering of the vertices in such a way that the integral for the normalizing constant $I_{G}(\delta,D)$ decomposes into an integral over a bipartite graph and an integral over the remaining variables. Similarly as in the proof of Theorem~\ref{thm_general}, label one of the end points of $e$ as `$1$', label the other end point as `$d+2$', and label the $d$ vertices involved in triangles over the edge $e$ by $2,\ldots,d+1$. Label all remaining vertices by $d+3,\ldots,p$. Let $\bar{\mathcal{G}}$ denote the moral DAG to $G$ with edge set $\bar{\mathcal{E}}$ and similarly for $G^e$. 

By Theorem~\ref{thm_int_D}, the normalizing constant for $G$ decomposes into an integral over the variables $A = \{a_{ij} \mid (i,j)\in \bar{\mathcal{E}}, i,j\leq d+2\}$ and an integral over the variables $B = \{a_{ij} \mid (i,j)\in \bar{\mathcal{E}}, a_{ij}\notin A\}$. The equivalent statement holds for the graph $G^e$ with $A^e = A\cup\{e\}$ and $B^e=B$. Note that the integral over $B$ is the same for $G$ as for $G^e$. The integral over $A$ is the normalizing constant for the complete bipartite graph $H_{2,d}$ with $U = \{1,d+2\}$ and $V = \{2,\dots,d+1\}$ where every vertex in $U$ is connected to all vertices in $V$ , but there are no edges within $U$ nor within $V$ . The integral over $A^e = A\cup\{e\}$ is the normalizing constant for the complete bipartite graph $H_{2,d}$ with one additional edge connecting the two nodes in $U$. We denote this graph by $H_{2,d}^e$. So
\begin{eqnarray*}
I_{G}(\delta,D) &=& I_{G^e}(\delta,D)\;\frac{I_{H_{2,d}}(\delta,D)}{I_{H_{2,d}^e}(\delta,D)}\\
&=& I_{G^e}(\delta,D)\frac{I_{H_{2,d}}(\delta,\mathbb{I}_{d+2}) \bar{I}_{H_{2,d}}(\delta, D)}{I_{H_{2,d}^e}(\delta,D)},
\end{eqnarray*}
where $\bar{I}_{H_{2,d}}(\delta, D)$ is given by Proposition~\ref{prop_bipartite_D}.

The additional edge $e$ makes the graph $H_{2,m}^e$ chordal and hence the normalizing constant is computed using (\ref{eq_chordal}):
$$
I_{H_{2,d}^e}(\delta,D) = I_{H_{2,d}^e}(\delta,\mathbb{I}_{d+2}) \frac{\prod_{j=2}^{d+1} |D_{\{1,j,d+2\}}|}{|D_{\{1,d+2\}}|^{d-1}}.
$$
By Theorem~\ref{thm_general}, 
$$
\frac{I_{H_{2,d}}(\delta,\mathbb{I}_{d+2})}{I_{H_{2,d}^e}(\delta,\mathbb{I}_{d+2})} = \pi^{-1/2}\frac{\Gamma\left(\delta + \frac12(d+2)\right)}{\Gamma\left(\delta + \frac12(d+3)\right)}.
$$
By collecting all terms we find
\begin{align*}
I_{G}(\delta,D) &= I_{G^e}(\delta,D)\frac{I_{H_{2,d}}(\delta,\mathbb{I}_{d+2}) \bar{I}_{H_{2,d}}(\delta, D)}{I_{H_{2,d}^e}(\delta,\mathbb{I}_{d+2})} \frac{|D_{1,d+2}|^{d-1}}{\prod_{j=2}^{d+1} |D_{1,j,d+2}|}\\
&=\pi^{-1/2}\;\frac{\Gamma\left(\delta + \frac12(d+2)\right)}{\Gamma\left(\delta + \frac12(d+3)\right)}\; \frac{|D_{\{1,d+2\}}|^{d-1}}{\prod_{j=2}^{d+1} |D_{\{1,j,d+2\}}|} \;\bar{I}_{H_{2,d}}(\delta, D) \; I_{G^e}(\delta,D).
\end{align*}
The proof now is complete.
\end{proof}

Corollary~\ref{cor_tree3} can be generalized to graphs of minimum fill-in 1 and arbitrary treewidth to obtain an extension of Theorem~\ref{thm_general} to general $D$. This involves decomposing the normalizing constant for $G$ into a normalizing constant for the chordal graph $G^e$ and the quotient of the normalizing constants for the subgraph induced by the triangles over the edge $e$. This technical result is given in Theorem~(S.3) in the Supplementary Material. 

We now prove our main result which can be applied to compute the normalizing constant for any graph. It involves showing how the normalizing constant changes when removing a whole clique from a graph. However, for graphs of minimum fill-in 1 it is advisable for computational reasons to use the specialized result given in Theorem~\ref{main_thm} in the Appendix.

In the following, we denote by $G_A$ the subgraph of $G$ induced by the vertices $A\subset V$. In the following theorem, we will encounter a symmetric matrix $T_{AA}=(T_{ij})_{i,j\in A}$. Denoting Kronecker's delta by $\delta_{ij}$, we define the matrix of differential operators, 
$$
\frac{\partial}{\partial T_{AA}} 
= \Big(\tfrac12(1+\delta_{ij})\frac{\partial}{\partial T_{ij}}\Big)_{i,j\in A} \, ,
$$
as in~\citep{Garding,Maass1971}.  The corresponding determinant, $\det(\partial/\partial T_{AA})$, and the $(r,s)$th cofactor, $\Cof_{rs}(\partial/\partial T_{AA})$, are defined in the usual way. 

We will also make use of fractional powers of differential operators, a concept which is widely used in some areas of probability theory and mathematical analysis \citep{bojdecki_gorostiza_1999,hille_phillips_1957} but which is new to the study of Wishart distributions for graphical models.   In its simplest formulation, suppose a function $f:\R\to\R$ is such that its $n$th derivative, $(\hbox{d}/\hbox{d}x)^n f(x)$, can be analytically continued as a function of $n$ to a domain in $\C$; this allows us to define the $\alpha$th derivative, $(\hbox{d}/\hbox{d}x)^\alpha f(x)$ where $\alpha$ belongs to the domain of analyticity.  

\citet{Garding} defined fractional powers, $\big(\det(\partial/\partial T_{AA})\big)^\alpha$, of the determinant $\det(\partial/\partial T_{AA})$ by means of analytic continuation in $\alpha$.  We will apply G{\aa}rding's fractional powers of operators to calculate the normalizing constant $I_G(\delta,D)$, and we provide in Example \ref{example:IG5deltaD} an explicit calculation for a case in which the fractional power of the determinant $\det(\partial/\partial T_{AA})$ is $-1/2$.  

The following theorem is the main result of the paper.  In this result, we express $I_G(\delta,D)$ in terms of a series in which derivatives with respect to the $U_{AA}$ are calculated, then the outcome is evaluated at $U_{AA} = T_{AA}$, then derivatives with respect to the $T_{AA}$ are calculated, and then the resulting expression is evaluated at $T_{AA} = D_{AA}$.

\begin{thm}
\label{real_thm}
Let $G=(V,E)$ be an undirected graph and partition $V=A\cup B$ such that the induced subgraph $G_B$ is a clique. Let $I = \{(i,j) \in A \times B \mid (i,j)\in E\}$ denote the edges connecting $A$ and $B$, and let $I_1$ denote the end points in $A$ and $I_2$ the end points in $B$ (i.e.~the projection of $I$ onto the first and second coordinate). Define 
\begin{equation}
\label{TAAmatrixoperator}
\partial_{I_1,I_2}(D,T_{AA}) = \bigg(-D_{I_2(r), I_2(s)} \,\Cof_{I_1(r), I_1(s)}\Big(\frac{\partial}{\partial T_{AA}}\Big)\bigg)_{r,s=1}^{|I|} \, ,
\end{equation}
a $|I| \times |I|$ matrix of differential operators.  Then 
\begin{align*}
I_{G}(\delta, D) =\;& \pi^{|I|/2}\;\Gamma_{|B|}\big(\delta+\tfrac12(|B|+1)\big) |D_{BB}|^{-(\delta+\frac{1}{2}(|B|+1))}\\
&\ \cdot \big(\det\partial_{I_1,I_2}(D,T_{AA})\big)^{-1/2} \\
&\ \cdot \mathop{\sum\cdots\sum}_{0\leq j_{rs} < \infty \atop 1\leq r \leq s\leq |I|} \big(\det\partial_{I_1,I_2}(D,U_{AA})\big)^{-j_{..}} \\
&\ \cdot \bigg(\prod_{1\leq r \leq s\leq |I|} \; \frac{(1+\delta_{rs})^{j_{rs}}}{j_{rs}!}\; D_{I_1(r), I_2(r)}^{\,j_{rs}} \,D_{I_1(s), I_2(s)}^{\,j_{rs}} \\
& \qquad\qquad \cdot \big(\Cof_{rs} \: \partial_{I_1,I_2}(D,U_{AA})\big)^{j_{rs}}\bigg)\\
&\qquad\qquad\qquad \cdot I_{G_A}(\delta+\tfrac12|I|+j_{..}, U_{AA})\; \bigg|_{U_{AA}=T_{AA}}\; \bigg|_{T_{AA} = D_{AA}}.
\end{align*}
\end{thm}

\medskip

As a corollary of this theorem, we obtain an analogous formula for the case in which $D=\mathbb{I}_p$.

\begin{cor}
\label{real_cor}
Let $G=(V,E)$ be an undirected graph with vertices $V=\{1,\dots ,p\}$. Let $V$ be partitioned such that $V=A\cup B$ and the induced subgraph $G_B$ is a clique. Let $I = \{(i,j) \in A \times B \mid (i,j)\in E\}$ denote the edges connecting $A$, $B$ and let $I_1$ denote the end points in $A$ and $I_2$ the end points in $B$. Then
\begin{align*}
I_{G}(\delta, \mathbb{I}_{p}) =\;& \pi^{|I|/2}\;\Gamma_{|B|}\big(\delta+\tfrac12(|B|+1)\big)\\
&\quad\cdot \partial_{I_1,I_2}(D,T_{AA})
I_{G_A}(\delta+|I|/2, T_{AA})\; \bigg|_{T_{AA}=\mathbb{I}_{|A|}}.
\end{align*}
\end{cor}

Theorem~\ref{real_thm} and Corollary~\ref{real_cor} enable calculation of the normalizing constant of the $G$-Wishart distribution for any graph by removing cliques sequentially until the resulting graph is chordal, in which case the normalizing constant is known. In the following example we show how to apply Theorem~\ref{real_thm} in order to compute the normalizing constant for general $D$ for the graph $G_5\,$ given in Figure~\ref{fig_graph_5}.

\begin{example}
\label{example:IG5deltaD}
We wish to calculate
$$
I_{G_5}(\delta, D) = \int_{K\in\mathbb{S}^5_{\succ 0}(G_5)} |K|^{\delta}\exp(-\tr(KD)) \dd K.
$$
We partition the matrix $K$ into blocks, 
$$
K = \begin{pmatrix}
K_{AA} & K_{AB} \\ K_{AB}^T & K_{BB}
\end{pmatrix},
$$
where
$$
K_{AA} = \begin{pmatrix} k_{11} & k_{12} \\ k_{12} & k_{22}\end{pmatrix}, \ K_{AB} = \begin{pmatrix} 0 & k_{14} & k_{15} \\ k_{23} & 0 & 0\end{pmatrix}, \ K_{BB} = \begin{pmatrix} k_{33} & k_{34} & k_{35} \\ k_{34} & k_{44} & k_{45} \\ k_{35} & k_{45} & k_{55}\end{pmatrix}.
$$
Noting that $K_{BB}$ is unconstrained, we now apply Theorem~\ref{real_thm}. In the following, we provide all the ingredients of the calculation, \textit{viz.}, 
$$
I_1 = (2,1,1), \quad I_2=(3,4,5), \quad \vecc(D_{AB}^I) = \begin{pmatrix}d_{23} \\ d_{14} \\ d_{15}\end{pmatrix},
$$ 
$$
\Lambda^{-1} = |K_{AA}|^{-1} \begin{pmatrix} d_{33} k_{11} & -d_{34}k_{12} & -d_{35}k_{12} \\ -d_{34} k_{12} & d_{44}k_{22} & d_{45}k_{22} \\ -d_{35} k_{12} & d_{45}k_{22} & d_{55}k_{22}\end{pmatrix}.
$$
Further, the matrix of differential operators is 
\begin{align*}
\partial_{I_1,I_2}(D,T_{AA}) &= \bigg(-D_{I_2(r), I_2(s)} \Cof_{I_1(r), I_1(s)}\Big(\frac{\partial}{\partial T_{AA}}\Big)\bigg)_{r,s=1}^{3} \\
&= \begin{pmatrix} 
-d_{33} \frac{\partial}{\partial T_{11}} & \tfrac12d_{34}\frac{\partial}{\partial T_{12}} & \tfrac12d_{35}\frac{\partial}{\partial T_{12}} \\[10pt]
\tfrac12d_{34} \frac{\partial}{\partial T_{12}} & -d_{44}\frac{\partial}{\partial T_{22}} & -\tfrac12d_{45}\frac{\partial}{\partial T_{22}} \\[10pt]
\tfrac12d_{35} \frac{\partial}{\partial T_{12}} & -\tfrac12d_{45}\frac{\partial}{\partial T_{22}} & -d_{55}\frac{\partial}{\partial T_{22}}
\end{pmatrix}
\end{align*}
and similarly for $\partial_{I_1,I_2}(D,U_{AA})$.  

Since $K_{AA}$ is unconstrained, the integral $I_{G_A}(\delta, U_{AA})$ is a standard Wishart normalizing constant, so we have 
$$
I_{G_A}(\delta, U_{AA}) = \Gamma_2\big(\delta+\tfrac{3}{2}\big) \, |U_{AA}|^{-(\delta+\frac{3}{2})}.
$$ 
Then from Theorem~\ref{real_thm} we obtain
\begin{align}
\label{G_5_formula1}
I_{G_5}(\delta,D&) \nonumber\\
=\;& \pi^{3/2}\;\Gamma_{3}(\delta+2) \;|D_{BB}|^{-(\delta+2)} \; (\det\partial_{I_1,I_2}(D,T_{AA}))^{-1/2}\nonumber\\ 
&\cdot \mathop{\sum\cdots\sum}_{0\leq j_{rs} < \infty \atop 1 \le r \le s \le 3} \;\Gamma_{2}(\delta+3+j_{..}) \; (\det\partial_{I_1,I_2}(D,U_{AA}))^{-j_{..}}\nonumber\\ 
&\ \cdot \bigg(\prod_{1\leq r \leq s\leq 3} \frac{(1+\delta_{rs})^{j_{rs}}}{j_{rs}!}\; D_{I_1(r), I_2(r)}^{\,j_{rs}} \,D_{I_1(s), I_2(s)}^{\;j_{rs}} \\
&\qquad \,(\Cof_{rs}\, \partial_{I_1,I_2}(D,U_{AA}))^{j_{rs}}\bigg)
\cdot |U_{AA}|^{-(\delta+3+j_{..})} \bigg|_{U_{AA}=T_{AA}}\; \bigg|_{T_{AA}=D_{AA}}. \nonumber
\end{align}

For the case in which $D=\mathbb{I}_5$, we have $D_{I_1(r),I_2(r)} = 0$ for all $r=1,2,3$ and hence we deduce the result given in Corollary~\ref{real_cor}, \textit{viz.},
\begin{align*}
I_{G_5}(\delta, \mathbb{I}_{5}) =\;& \pi^{3/2}\;\Gamma_{3}(\delta+2)\;\Gamma_2(\delta+3) \\
\;& \quad\cdot (\det\partial_{I_1,I_2}(D,T_{AA}))^{-1/2} \;|T_{AA}|^{-(\delta+3)}\; \bigg|_{T_{AA}=\mathbb{I}_{|A|}}.
\end{align*}
By (\ref{TAAmatrixoperator}), 
\begin{align*}
(\det\partial_{I_1,I_2}(D,&T_{AA}))^n \;|T_{AA}|^{-(\delta+3)}\; \bigg|_{T_{AA}=\mathbb{I}_{|A|}} \\
\!=\;\;&(-1)^{n}\, \Big(\frac{\partial}{\partial T_{11}}\Big)^n \Big(\frac{\partial}{\partial T_{22}}\Big)^{2n} (T_{11}T_{22})^{-(\delta+3)}\; \bigg|_{T_{11}=T_{22}=1}\\
=\;\;& \;(\delta+3)(\delta+4)\cdots(\delta+2+n)(\delta+3)(\delta+4)\cdots(\delta+2+2n)       \\
=\;\;& \;\frac{\Gamma(\delta+3+n)}{\Gamma(\delta+3)}\frac{\Gamma(\delta+3+2n)}{\Gamma(\delta+3)}.
\end{align*}
The latter expression, considered as a function of a complex variable $n$, is analytic in the complex plane on a region containing the point $n = -\tfrac12$.  Therefore, in accordance with G{\aa}rding's fractional calculus, 
\begin{align*}
(\det\partial_{I_1,I_2}(D,T_{AA}))^{-1/2} \;|T_{AA}|^{-(\delta+3)}&\;\bigg|_{T_{AA}=\mathbb{I}_{|A|}} \\
\!=\;\;& \frac{\Gamma(\delta+3+n)}{\Gamma(\delta+3)}\frac{\Gamma(\delta+3+2n)}{\Gamma(\delta+3)} \bigg|_{n=-\frac{1}{2}}\\
=\;\;& \;\frac{\Gamma(\delta+\frac{5}{2})}{\Gamma(\delta+3)}\frac{\Gamma(\delta+2)}{\Gamma(\delta+3)}, 
\end{align*}
so we obtain the same result for $I_{G_5}(\delta, \mathbb{I}_{5})$ as in (\ref{ex_I}).
\end{example}

\smallskip

To complete this section, we now provide the proofs of Theorem \ref{real_thm} and Corollary \ref{real_cor}.

\smallskip

\noindent{P\sc{roof} of Theorem \ref{real_thm}.}  
The matrix $K$ is of the form
$$
K = \begin{pmatrix}
K_{AA} & K_{AB} \\ K_{AB}^T & K_{BB}
\end{pmatrix}\in \mathbb{S}^p_{\succ 0},
$$
where $K_{BB}$ has no zero constraints. By applying the determinant formula for block matrices, 
$$
|K| = |K_{AA}| \cdot |K_{BB} - K_{AB}^T(K_{AA})^{-1} K_{AB}|,
$$
and changing variables, $K_{BB} \to K_{BB} +K_{AB}^T(K_{AA})^{-1} K_{AB}$, we obtain
\begin{align*}
I_{G}(\delta, D) =\;& \int |K_{BB}|^{\delta}\; \exp(-\tr(K_{BB}D_{BB})) \dd K_{BB}\\
& \cdot \int |K_{AA}|^{\delta}\; \exp(-\tr(K_{AA} D_{AA})) \\
& \quad \cdot \int \exp(-2\tr(K_{AB} D_{AB})) \\
& \qquad\qquad \cdot\exp(-\tr(D_{BB} K_{AB}^T(K_{AA})^{-1}K_{AB}))  \dd K_{AB}\dd K_{AA}
\end{align*}
and hence
\begin{align*}
I_{G}(\delta, D) =\;& \Gamma_{|B|}\big(\delta+\tfrac12(|B|+1)\big) |D_{BB}|^{-(\delta+\frac{1}{2}(|B|+1))}\\
& \cdot \int |K_{AA}|^{\delta}\; \exp(-\tr(K_{AA} D_{AA})) \\
& \quad \cdot \int \exp(-2\tr(K_{AB} D_{AB})) \\
& \qquad\qquad \cdot\exp(-\tr(D_{BB} K_{AB}^T(K_{AA})^{-1}K_{AB}))  \dd K_{AB}\dd K_{AA},
\end{align*}
where we applied (\ref{Siegel}) to compute the integral over $K_{BB}$. 

Denote by $\vecc(K_{AB})$ the vectorized matrix $K_{AB}$, written column-by-column.  We apply a formula for the Kronecker product of matrices (see Muirhead \cite[p.~76]{Muirhead1982}) to obtain 
$$
\tr(D_{BB}K_{AB}^T(K_{AA})^{-1}K_{AB}) = \big(\vecc(K_{AB})\big)^T \big(D_{BB} \otimes (K_{AA})^{-1}\big) \vecc(K_{AB}).
$$
Let $I = \{(i,j)\in A\times B \mid (K_{AB})_{ij} \neq 0\}$ and let $I_1$ denote the projection of $I$ onto the first index and $I_2$ the projection of $I$ onto the second index. Let $\vecc(K_{AB}^I)$ denote the column vector containing the non-zero entries of $\vecc(K_{AB})$ and let $\Lambda^{-1}$ be a matrix containing the entries of $D_{BB}\otimes (K_{AA})^{-1}$ corresponding to the components of $\vecc(K_{AB}^I)$, i.e., 
\begin{align}
\label{Lambda_inv}
(\Lambda^{-1})_{rs} =\;& D_{I_2(r), I_2(s)} (K_{AA}^{-1})_{I_1(r), I_1(s)} \nonumber \\
=\;&D_{I_2(r), I_2(s)}\,\frac{1}{|K_{AA}|}\,\Cof_{I_1(r), I_1(s)}(K_{AA}), 
\end{align}
where $\Cof_{ij}(K_{AA})$ denotes the $(i,j)$-th entry of the cofactor matrix of $K_{AA}$. Then 
\begin{align*}
\tr(K_{AB} D_{AB})=\;& \vecc(K_{AB}^I)^T\vecc(D_{AB}^I), \\
\tr(D_{BB} K_{AB}^T(K_{AA})^{-1}K_{AB}))  =\;& \vecc(K_{AB}^I)^T\;\Lambda^{-1}\;\vecc(K_{AB}^I),
\end{align*}
and hence we obtain the integral over $K_{AB}$ in the form of a Gaussian integral:
\begin{align*}
\int \exp(-2\tr(K_{AB} D_{AB}))&\exp(-\tr(D_{BB} K_{AB}^T(K_{AA})^{-1}K_{AB}))  \dd K_{AB}\\
=\;& \int \exp(-2\,\vecc(K_{AB}^I)^T\vecc(D_{AB}^I)) \\
&\qquad \cdot\exp(-\vecc(K_{AB}^I)^T\;\Lambda^{-1}\;\vecc(K_{AB}^I)) \dd K_{AB}^I\\
=\;& \pi^{|I|/2} \; |\Lambda|^{1/2} \;\exp(\vecc(D_{AB}^I)^T\,\Lambda\,\vecc(D_{AB}^I)).
\end{align*}
Therefore, 
\begin{align*}
I_{G}(\delta, D) =\;& \pi^{|I|/2}\;\Gamma_{|B|}\big(\delta+\tfrac12(|B|+1)\big) |D_{BB}|^{-(\delta+\frac{1}{2}(|B|+1))} \\ 
&\quad\cdot \int |K_{AA}|^{\delta+|I|/2}\; \exp(-\tr(K_{AA}D_{AA})) \\
&\qquad\quad\cdot \textrm{det}\left(\big[D_{I_2(r), I_2(s)} \,\Cof_{I_1(r), I_1(s)}(K_{AA})\big]_{r,s=1}^{|I|}\right)^{-1/2}\\
&\qquad\qquad\cdot\exp(\vecc(D_{AB}^I)^T\,\Lambda\,\vecc(D_{AB}^I))\dd K_{AA}.
\end{align*}
Now note that
\begin{multline}
\label{eqqqq}
\textrm{det}\left(\big[D_{I_2(r), I_2(s)} \,\Cof_{I_1(r), I_1(s)}(K_{AA})\big]_{r,s=1}^{|I|}\right) \; \exp(-\tr(K_{AA}D_{AA})) \\
=\; \textrm{det}(\partial_{I_1,I_2}(D,T_{AA})) 
\; \exp(-\tr(K_{AA}T_{AA}))\; \bigg|_{T_{AA}=D_{AA}}.\; 
\end{multline}
By analytic continuation~\cite{Garding}, we obtain
\begin{align}
\label{eq_identity}
I_{G}(\delta, D) =\;& \pi^{|I|/2}\;\Gamma_{|B|}\big(\delta+\tfrac12(|B|+1)\big) |D_{BB}|^{-(\delta+\frac{1}{2}(|B|+1))} \nonumber\\ 
&\quad\cdot \textrm{det}(\partial_{I_1,I_2}(D,T_{AA}))^{-1/2}\nonumber\\
&\quad\quad\cdot\int |K_{AA}|^{\delta+|I|/2}\; \exp(-\tr(K_{AA}T_{AA})) \\
&\quad\quad\quad\cdot\exp(\vecc(D_{AB}^I)^T\,\Lambda\,\vecc(D_{AB}^I))\dd K_{AA}\; \bigg|_{T_{AA}=D_{AA}}.\nonumber
\end{align}

Now we write the exponential function as an infinite series and apply the cofactor formula to express $\Lambda$ in terms of the entries of $\Lambda^{-1}$:
\begin{align*}
\exp(\vecc(&D_{AB}^I)^T\,\Lambda\,\vecc(D_{AB}^I))  \\ &= \mathop{\sum\cdots\sum}_{0\leq j_{rs} < \infty} \prod_{1\leq r \leq s\leq |I|} \frac{(1+\delta_{rs})^{j_{rs}}}{j_{rs}!}D_{I_1(r), I_2(r)}^{\,j_{rs}} \,D_{I_1(s), I_2(s)}^{\,j_{rs}}\,\Lambda_{rs}^{j_{rs}}\\ 
&= \mathop{\sum\cdots\sum}_{0\leq j_{rs} < \infty} \prod_{1\leq r \leq s\leq |I|} \frac{(1+\delta_{rs})^{j_{rs}}}{j_{rs}!}D_{I_1(r), I_2(r)}^{\,j_{rs}} \,D_{I_1(s), I_2(s)}^{\,j_{rs}}\\
& \qquad\qquad\quad \cdot |K_{AA}|^{j_{rs}}\, \Cof_{rs}\Big([D_{I_2(a), I_2(b)} \Cof_{I_1(a), I_1(b)}(K_{AA})]_{a,b=1}^{|I|}\Big)^{j_{rs}}\\
& \qquad\qquad\qquad\qquad \cdot \det\Big([D_{I_2(a), I_2(b)} \Cof_{I_1(a), I_1(b)}(K_{AA})]_{a,b=1}^{|I|}\Big)^{-j_{rs}}.
\end{align*}
Denoting $\mathop{\sum\sum}_{0\leq r < s \leq |I|} j_{rs}$ by $j_{..}$, and introducing the differentials 
$$
\frac{\partial}{\partial U_{AA}} 
= \Big(\tfrac12(1+\delta_{ij})\frac{\partial}{\partial U_{ij}}\Big)_{i,j\in A}
$$
similar to (\ref{eqqqq}), we obtain
\begin{align*}
I_{G}(\delta, D) =\;& \pi^{|I|/2}\;\Gamma_{|B|}\big(\delta+\tfrac12(|B|+1)\big) |D_{BB}|^{-(\delta+\frac{1}{2}(|B|+1))}\\
&\cdot \textrm{det}\bigg(\bigg[-D_{I_2(r), I_2(s)} \,\Cof_{I_1(r), I_1(s)}\left(\frac{\partial}{\partial T_{AA}}\right)\bigg]_{r,s=1}^{|I|}\bigg)^{-1/2}\\
&\quad\cdot \mathop{\sum\cdots\sum}_{0\leq j_{rs} < \infty} \;\det\bigg(\bigg[-D_{I_2(a), I_2(b)} \Cof_{I_1(a), I_1(b)}\left(\frac{\partial}{\partial U_{AA}}\right)\bigg]_{a,b=1}^{|I|}\bigg)^{-j_{..}}\\
&\qquad \cdot \Bigg(\prod_{1\leq r \leq s\leq |I|} \; \frac{(1+\delta_{rs})^{j_{rs}}}{j_{rs}!}\; D_{I_1(r), I_2(r)}^{\,j_{rs}} \,D_{I_1(s), I_2(s)}^{\,j_{rs}} \\
&\qquad\qquad \cdot \Cof_{rs}\bigg(\bigg[-D_{I_2(a), I_2(b)} \Cof_{I_1(a), I_1(b)}\left(\frac{\partial}{\partial U_{AA}}\right)\bigg]_{a,b=1}^{|I|}\bigg)^{j_{rs}}\Bigg)\\
&\qquad \cdot I_{G_A}(\delta+|I|/2+j_{..}, U_{AA})\; \bigg|_{U_{AA}=T_{AA}}\; \bigg|_{T_{AA}=D_{AA}},
\end{align*}
where in the last line we used the fact that 
$$
I_{G_A}(\delta, U_{AA}) = \int |K_{AA}|^{\delta}\; \exp(-\tr(K_{AA}U_{AA})) \dd K_{AA}.
$$
This completes the proof.
\hfill\qed

\bigskip

\noindent{P\sc{roof} of Corollary \ref{real_cor}}.  
This follows from Theorem \ref{real_thm} by setting $D = \mathbb{I}_p$ in (\ref{eq_identity}). 
\hfill\qed

\section{Discussion}
\label{sec:discussion}
In this paper we provided an explicit representation of the $G$-Wishart normalizing constant for general graphs. Theorem~\ref{real_thm} is our main result and it can be applied to compute the normalizing constant of any graph. However, for particular classes of graphs one might be able to obtain simpler formulas using a more specialized approach as can be seen by comparing the two formulas (\ref{G_5_formula1}) and (\ref{G_5_formula2}) for $G_5$. In Proposition~\ref{prop_bipartite_D} we provided a simpler formula for bipartite graphs $H_{2,m}$, and in Corollary~\ref{cor_tree3} and in Theorem~\ref{main_thm} for graphs with minimum fill-in 1. Note that Corollary~\ref{cor_tree3} and Theorem~\ref{main_thm} can be applied to graphs of minimum fill-in 1 and also to graphs which are clique sums of graphs of minimum fill-in 1. 
 
 Even in modest dimensions the size of the graph space necessitates
iterative methods to address model uncertainty, as exhaustive
enumeration is infeasible.  Since the graphical model may be just one
part of a larger hierarchy, Markov chain Monte Carlo methods are
naturally used to perform posterior inference.  In such scenarios the
chain moves between graphs in each scan of the parameter set and the
transition probability reduces to the evaluation of ratios of
G-Wishart normalizing constants.  Since direct evaluation of these
constants has appeared infeasible, previous work used computationally
intensive sampling-based methods to approximate \mbox{this ratio.}  

Our paper shows that computing the exact normalizing constant of the $G$-Wishart distribution is possible in principle. The various examples in this paper also make it clear that one can hope to find more computationally efficient procedures than Theorem~\ref{real_thm} for computing the normalizing constant of the $G$-Wishart distribution for particular classes of graphs. Important future work is the development of specialized methods for computing the normalizing constants of different classes of graphs that are important for applications, one example being grids, which are widely used in spatial applications.

\section*{Acknowledgments}

C.U.'s research was supported by the Austrian Science Fund (FWF) Y~903-N35. A.L.'s research was supported by Statistics for Innovation $sfi^2$ in Oslo. D.R.'s research was partially supported by the U.S. National Science Foundation grant DMS-1309808; and by a Romberg Guest Professorship at the Heidelberg University Graduate School for Mathematical and Computational Methods in the Sciences, funded by German Universities Excellence Initiative grant GSC 220/2.

\bibliographystyle{imsart-nameyear}
\bibliography{GWishart}

\section*{Supplementary Material}
\subsection*{Exact Formulas for the Normalizing Constants of Wishart Distributions for Graphical Models with Minimum Fill-In 1}
\vspace{-0.2cm}

In the following, we prove an extension of Corollary~\ref{cor_tree3} to obtain a generalization of Theorem~\ref{thm_general} for arbitrary $D$. This requires generalizing Proposition~\ref{prop_bipartite_D} to block matrices of the form 
$$
K = \begin{pmatrix}
K_{AA} & K_{AB} \\ K_{AB}^T & K_{BB} 
\end{pmatrix}\in \mathbb{S}^p_{\succ 0}
$$
where $K_{AA}$ is arbitrary of size $2\times 2$, $K_{AB}$ is complete of size $2\times m$ and $K_{BB}$ is arbitrary of size $m\times m$. In
Lemma~(S.1) we analyze the case in which $K_{AA}$ is complete and in Lemma~(S.2) the case in which $K_{AA}$ is diagonal.

\begin{lem*}[S.1]
\label{lemma_complete2}
Let $G$ be a graph on $2+m$ vertices with two nodes that are connected to each other and to all other nodes, i.e.~$K$ is of the form
$$
K = \begin{pmatrix}
K_{AA} & K_{AB} \\ K_{AB}^T & K_{BB} 
\end{pmatrix}\in \mathbb{S}^{2+m}_{\succ 0},
$$
where $K_{AA}$ is a complete $2\times 2$ matrix, $K_{AB}$ is a complete $2\times m$ matrix and $K_{BB}$ is an arbitrary $m\times m$ matrix. Then the integral $I_{G}(\delta, D)$ converges absolutely for all $\delta> -1$ and $D\in\mathbb{S}^{2+m}_{\succ 0}$.  Further, 
\begin{multline*}
I_{G}(\delta, D) =  \pi^{m}\;\Gamma_{2}\left(\delta+\tfrac{3}{2}\right)\; |D_{AA}|^{-\left(\delta+\frac12(m+3)\right)} \\ \cdot I_{G_B}\big(\delta+1, D_{BB}-D_{AB}^T D_{AA}^{-1} D_{AB}\big).
\end{multline*}
\end{lem*}
\begin{proof}
By applying the determinant formula for block matrices, making a change of variables to replace $K_{AB}$ by $K_{AA}^{1/2}K_{AB}K_{BB}^{1/2}$ and applying (\ref{james2}) as in the proof of Proposition~\ref{prop_bipartite_D} we find that
\begin{align*}
I_{G}(\delta, D) = \ & \frac{\pi^{m}\,\Gamma_{2}\left(\delta+\frac{3}{2}\right)}{\Gamma_2\left(\delta+\frac12(m+3)\right)} \\
& \cdot \int |K_{AA}|^{\delta+\frac12 m}\; |K_{BB}|^{\delta + 1} \exp(-\tr(K_{AA}D_{AA}) - \tr(K_{BB}D_{BB})) \\
& \qquad\quad \cdot \; {}_0 F_1\big(\delta+\tfrac12(m+3);\; K_{AA} D_{AB} K_{BB} D_{AB}^T\big) \dd K_{AA} \dd K_{BB}.
\end{align*}
Since $K_{AA}$ is complete, we can apply (\ref{james1}):
\begin{align*}
I_{G}(\delta,D) = \ & \pi^{m}\;\Gamma_{2}\big(\delta+\tfrac{3}{2}\big)\; |D_{AA}|^{-\big(\delta+\frac12(m+3)\big)} \\
& \; \cdot\int |K_{BB}|^{\delta + 1}\exp(- \tr(K_{BB}(D_{BB}-D_{AB}^T D_{AA}^{-1} D_{AB})) \dd K_{BB}\\
= \ & \pi^{m}\;\Gamma_{2}\left(\delta+\tfrac{3}{2}\right)\; |D_{AA}|^{-\left(\delta+\frac12(m+3)\right)} \\
& \qquad\shoveright{ } \cdot I_{G_B}(\delta+1, D_{BB}-D_{AB}^T D_{AA}^{-1} D_{AB}).
\end{align*}
This completes the proof.  
\end{proof}

In the following lemma, we will encounter a symmetric matrix $E_{BB}=(E_{ij})_{i,j\in B}$. Denoting Kronecker's delta by $\delta_{ij}$, we define the matrix of differential operators, 
$$
\frac{\partial}{\partial E_{BB}} 
= \Big(\tfrac12(1+\delta_{ij})\frac{\partial}{\partial E_{ij}}\Big)_{i,j\in B}
$$
and denote its minor corresponding to the rows $\{\alpha_1,\alpha_2\}$ and the columns $\{\beta_1,\beta_2\}$ by 
$$
\Big|\Big(\frac{\partial}{\partial E_{BB}}\Big)_{\{\alpha_1\alpha_2\},\{\beta_1\beta_2\}}\Big|.
$$
\begin{lem*}[S.2]
\label{lemma_diagonal2}
Let $G$ be a graph on $2+m$ vertices with two nodes that are connected to all other nodes but not to each other, i.e.~$K$ is of the form
$$K = \begin{pmatrix}
K_{AA} & K_{AB} \\ K_{AB}^T & K_{BB} 
\end{pmatrix}\in \mathbb{S}^{m+2}_{\succ 0},$$
where $K_{AA}=\diag(\kappa_1,\kappa_2)$, $K_{AB}$ is a complete $2\times m$ matrix, and $K_{BB}$ is an arbitrary $m\times m$ matrix. Then the integral $I_{G}(\delta, D)$ converges absolutely for all $\delta> -1$ and $D\in\mathbb{S}^{2+m}_{\succ 0}$, and $I_{G}(\delta, D)$ is given by
\begin{align*}
&\frac{\pi^{m}\,\Gamma_2\left(\delta+\frac{3}{2}\right)}{\Gamma_2\big(\delta+\frac12(m+3)\big)}\, \big[\Gamma\left(\delta+\tfrac12(m+2)\right)\big]^2 \;\sum_{q=0}^{\infty}\; \frac{\big(\delta+\frac12(m+2)\big)_{q}}{q!\;\big(\delta+\frac12(m+3)\big)_{2q}} \\
&\quad \cdot \sum_{l=0}^{\infty} \frac{1}{l!\;\big(\delta+2q+\tfrac12(m+3)\big)_{l}}\,\; \sum_{l_1+l_2=l} \binom{l}{l_1} \bigg(\prod_{i=1}^2 \left(\delta+q+\tfrac12(m+2)\right)_{l_i}\bigg)\\
&\quad \cdot \sum_{Q} \binom{q}{Q}\!\!\!\left(\!\prod_{3\leq\alpha_1<\alpha_2\leq m+2} \!\!\!\!\!\!\!\!\!|D_{A,\{\alpha_1,\alpha_2\}}|^{Q_{\alpha_1\alpha_2++}}\!\!\!\right)\!\!\!\!\left(\prod_{3\leq\beta_1<\beta_2\leq m+2}  \!\!\!\!\!\!\!\!\!|D_{A,\{\beta_1,\beta_2\}}|^{Q_{++\beta_1\beta_2}}\!\!\!\right)\\
&\quad \cdot \left(\frac{\partial}{\partial t_1}\right)^{\!l_1} \!\left(\frac{\partial}{\partial t_2}\right)^{\!l_2} \left(\prod_{\substack{3 \le \alpha_1<\alpha_2 \le m+2\\ 3 \le \beta_1<\beta_2 \le m+2}}\Bigg|\left(\frac{\partial}{\partial E_{BB}}\right)_{\{\alpha_1\alpha_2\},\{\beta_1\beta_2\}}\Bigg|^{Q_{\alpha_1\alpha_2\beta_1\beta_2}}\right)\\
&\quad \cdot I_{G_{B}}(\delta+1, E_{BB}) \Big|_{E_{BB}=D_{BB}-\sum_{j=1}^2 t_jD_{\{j\},B}^T D_{\{j\},B}} \; \Big|_{t_1=t_2=0},
\end{align*}
where $Q=(Q_{\alpha_1\alpha_2\beta_1\beta_2}: 3 \le \alpha_1<\alpha_2 \le m+2, \;3 \le \beta_1<\beta_2 \le m+2)$ is a vector of non-negative integers such that $Q_{++++} = q$.
\end{lem*}

\begin{proof}
By applying the determinant formula for block matrices, making a change of variables to replace $K_{AB}$ by $K_{AA}^{1/2}K_{AB}K_{BB}^{1/2}$ and applying (\ref{james2}) as in the proof of Proposition~\ref{prop_bipartite_D} we find that
\begin{align*}
I_{G}(\delta, D) = \ & \frac{\pi^{m}\;\Gamma_{2}\left(\delta+\frac{3}{2}\right)}{\Gamma_2\big(\delta+\frac12(m+3)\big)} \\
& \cdot \int |K_{AA}|^{\delta+\frac12 m}\; |K_{BB}|^{\delta + 1} \exp(-\tr(K_{AA}D_{AA}) - \tr(K_{BB}D_{BB})) \\
& \qquad \cdot\ {}_0 F_1\big(\delta+\tfrac12(m+3);\; K_{AA} D_{AB} K_{BB} D_{AB}^T\big) \dd K_{AA} \dd K_{BB}.
\end{align*}
Applying (\ref{muirhead}) and (\ref{classical0F1}) as in the proof of Proposition~\ref{prop_bipartite_D} we obtain
\begin{align*}
{}_0 F_1\big(\delta&+\tfrac12(m+3);K_{AA} D_{AB} K_{BB} D_{AB}^T\big)\\
 =\;& \sum_{q=0}^{\infty}\; \frac{1}{q!\;\big(\delta+\tfrac12(m+3)\big)_{2q}\;\big(\delta+\frac12(m+2)\big)_{q}} \; |K_{AA} D_{AB} K_{BB} D_{AB}^T|^q \\
& \quad \cdot \; \sum_{l=0}^{\infty} \frac{1}{l!\;\big(\delta+2q+\frac12(m+3)\big)_{l}} \big(\tr(K_{AA} D_{AB} K_{BB} D_{AB}^T)\big)^l \; .
\end{align*}
Since $K_{AA} = \diag(\kappa_1,\kappa_2)$,
\begin{multline*}
\tr(K_{AA} D_{AB} K_{BB} D_{AB}^T) \\ 
= \kappa_1 \; \tr(K_{BB} D_{\{1\},B}^T D_{\{1\},B})+\kappa_2 \; \tr(K_{BB} D_{\{2\},B}^T D_{\{2\},B})
\end{multline*}
and hence by the Multinomial Theorem
\begin{align*}
 \big(\tr(K_{AA} K_{AB} K_{BB} K_{AB}^T)\big)^l = \sum_{l_1+l_2 = l}\,\binom{l}{l_1} \,\kappa_1^{l_1} \,\kappa_2^{l_2} &\big(\tr(K_{BB} D_{\{1\},B}^T D_{\{1\},B})\big)^{l_1}\\ &\cdot\big(\tr(K_{BB} D_{\{2\},B}^T D_{\{2\},B})\big)^{l_2}.
\end{align*}
By the Binet-Cauchy formula (\cite[p.~1]{Karlin1968}), 
\begin{align*}
|K_{AA} & D_{AB} K_{BB} D_{AB}^T| \\
& = |K_{AA}|\cdot |D_{AB}K_{BB}D_{AB}^T| \\
& = \kappa_1\,\kappa_2\, \sum_{\substack{3 \le \alpha_1<\alpha_2 \le m+2\\ 3 \le \beta_1<\beta_2 \le m+2}} |D_{A,\{\alpha_1,\alpha_2\}}| \; |K_{\{\alpha_1,\alpha_2\},\{\beta_1,\beta_2\}}| |D_{A,\{\beta_1,\beta_2\}}|.
\end{align*}
Hence by the Multinomial Theorem, 
\begin{align*}
|K_{AA} D_{AB} K_{BB} & D_{AB}^T|^q \\
= \ & \kappa_1^q\,\kappa_2\,^q \sum_{Q} \binom{q}{Q} \prod_{\substack{3 \le \alpha_1<\alpha_2 \le m+2\\ 3 \le \beta_1<\beta_2 \le m+2}} |D_{A,\{\alpha_1,\alpha_2\}}|^{Q_{\alpha_1\alpha_2\beta_1\beta_2}}\\
& \qquad\qquad \cdot |K_{\{\alpha_1,\alpha_2\},\{\beta_1,\beta_2\}}|^{Q_{\alpha_1\alpha_2\beta_1\beta_2}}\, |D_{A,\{\beta_1,\beta_2\}}|^{Q_{\alpha_1\alpha_2\beta_1\beta_2}}\\
= \ & \kappa_1^q\,\kappa_2\,^q\,\sum_{Q} \binom{q}{Q} \bigg(\prod_{3\leq\alpha_1<\alpha_2\leq m+2} |D_{A,\{\alpha_1,\alpha_2\}}|^{Q_{\alpha_1\alpha_2++}}\bigg)\\
& \qquad\qquad\cdot \bigg(\prod_{3\leq\alpha_1<\alpha_2\leq m+2}  |D_{A,\{\beta_1,\beta_2\}}|^{Q_{++\beta_1\beta_2}}\bigg) \\
& \qquad\qquad\cdot \bigg(\prod_{\substack{3 \le \alpha_1<\alpha_2 \le m+2\\ 3 \le \beta_1<\beta_2 \le m+2}} |K_{\{\alpha_1,\alpha_2\},\{\beta_1,\beta_2\}}|^{Q_{\alpha_1\alpha_2\beta_1\beta_2}}\bigg).
\end{align*}
Collecting all terms, we find that 
\begin{align*}
I_{G}(\delta,& D) \nonumber \\
= \ & \frac{\pi^{m}\,\Gamma_2\left(\delta+\frac{3}{2}\right)}{\Gamma_2\big(\delta+\frac12(m+3)\big)} \;\sum_{q=0}^{\infty}\; \frac{1}{q!\;\big(\delta+\frac12(m+3)\big)_{2q}\;\big(\delta+\frac12(m+2)\big)_{q}} \nonumber \\
& \cdot \sum_{l=0}^{\infty} \frac{1}{l!\;\big(\delta+2q+\frac12(m+3)\big)_{l}}\,\sum_{l_1+l_2=l} \binom{l}{l_1}\! \bigg(\prod_{i=1}^2 \int_0^\infty \kappa_i^{\delta+q+l_{i}+\frac12 m}e^{-\kappa_i}\dd \kappa_i\!\bigg) \nonumber \\
& \cdot \sum_{Q} \binom{q}{Q} \bigg(\prod_{3\leq\alpha_1<\alpha_2\leq m+2} |D_{A,\{\alpha_1,\alpha_2\}}|^{Q_{\alpha_1\alpha_2++}}\bigg) \nonumber \\
& \qquad\qquad\qquad\qquad \cdot \bigg(\prod_{3\leq\beta_1<\beta_2\leq m+2} |D_{A,\{\beta_1,\beta_2\}}|^{Q_{++\beta_1\beta_2}}\bigg) \nonumber \\
& \qquad \cdot \int |K_{BB}|^{\delta + 1} \exp( - \tr(K_{BB}D_{BB})) \bigg(\prod_{j=1}^2\big(\tr(K_{BB} D_{\{j\},B}^T D_{\{j\},B})\big)^{l_j}\bigg) \nonumber\\
& \qquad\qquad \cdot \Bigg(\prod_{\substack{3 \le \alpha_1<\alpha_2 \le m+2\\ 3 \le \beta_1<\beta_2 \le m+2}} |K_{\{\alpha_1,\alpha_2\},\{\beta_1,\beta_2\}}|^{Q_{\alpha_1\alpha_2\beta_1\beta_2}}\Bigg) \dd K_{BB}. \nonumber 
\end{align*}
The gamma integrals over $\kappa_1$ and $\kappa_2$ are computed readily, so only the integral over the variables $K_{BB}$ remains to be evaluated, and we shall evaluate that integral in terms of a normalizing constant for the graph $G_B$.  First, note that
\begin{multline}
\exp\big(- \tr(K_{BB}D_{BB})\big) \prod_{j=1}^2\Big(\tr(K_{BB} D_{\{j\},B}^T D_{\{j\},B})\Big)^{l_j} \nonumber \\
= \bigg(\frac{\partial}{\partial t_1}\bigg)^{\!l_1} 
\bigg(\frac{\partial}{\partial t_2}\bigg)^{\!l_2}  
\exp\big[- \tr(K_{BB}E_{BB})\big] \; \bigg|_{t_1=t_2=0}\; ,
\end{multline}
where 
$$
E_{BB} = D_{BB}-\sum_{j=1}^2 t_jD_{\{j\},B}^T D_{\{j\},B}.
$$
Let $E_{ij}$ denote the entry of  $E_{BB}$ corresponding to nodes $i$ and $j$ in $B$. Then
\begin{align*}
\int & |K_{BB}|^{\delta + 1} \exp( - \tr(K_{BB}E_{BB}))\, |K_{\{\alpha_1,\alpha_2\},\{\beta_1,\beta_2\}}|^{Q_{\alpha_1\alpha_2\beta_1\beta_2}}\dd K_{BB} \nonumber \\
=\, & \Big|\Big(\frac{\partial}{\partial E_{BB}}\Big)_{\{\alpha_1\alpha_2\},\{\beta_1\beta_2\}}\Big|^{Q_{\alpha_1\alpha_2\beta_1\beta_2}} \int |K_{BB}|^{\delta + 1} \exp( - \tr(K_{BB}E_{BB})) \dd K_{BB}. 
\nonumber
\end{align*}
By collecting all terms, we obtain the desired result.
\end{proof}

With these two lemmas, we now have the tools to generalize Corollary~\ref{cor_tree3} to graphs of treewidth larger than 2. In the following theorem, we show how the normalizing constant changes when removing one edge from an arbitrary chordal graph $G$. 

\begin{thm*}[S.3]
\label{main_thm}
Let $G=(V,E)$ be an undirected graph with minimum fill-in~1 on $p$ vertices. Let $G^e=(V,E^e)$ denote the graph $G$ with one additional edge $e$, i.e., $E^e=E\cup\{e\}$ such that $G^e$ is chordal. Let $d$ denote the number of triangles formed by the edge $e$ and two other edges in $G^e$. Let $V$ be partitioned such that $V= A\cup B\cup C$  with $|A|=2$, $|B|=d$, $|C|=p-d-2$, and where $A$ contains the two vertices adjacent to the edge $e$ in $G^e$, $B$ contains all vertices in $G^e$ that span a triangle with the edge $e$, and $C$ contains all remaining nodes. Then $I_{G}(\delta, D)$ is given by
\begin{align*}
&\pi^{-1/2} \; \frac{\Gamma\left(\delta+\frac{1}{2}(d+2)\right)}{\Gamma\left(\delta+\frac12(d+3)\right)}\;I_{G^e}(\delta, D) \;\;|D_{AA}|^{\delta+\frac{1}{2}(d+3)}\;\sum_{q=0}^{\infty}\; \frac{\big(\delta+\frac12(d+2)\big)_{q}}{q!\;\big(\delta+\frac12(d+3)\big)_{2q}} \\
& \ \ \cdot \sum_{l=0}^{\infty} \frac{1}{l!\;\big(\delta+2q+\frac12(d+3)\big)_{l}}\,\; \sum_{l_1+l_2=l} \binom{l}{l_1} \prod_{i=1}^2 \big(\delta+q+\tfrac{1}{2}(d+2)\big)_{l_i} \\
& \ \ \cdot \sum_{Q} \binom{q}{Q}\!\!  \left(\prod_{3\leq\alpha_1<\alpha_2\leq d+2} \!\!\!\!\!\!\!|D_{A,\{\alpha_1,\alpha_2\}}|^{Q_{\alpha_1\alpha_2++}}\!\right)\!\!\!\left(\prod_{3\leq\beta_1<\beta_2\leq d+2}  \!\!\!\!\!\!\!|D_{A,\{\beta_1,\beta_2\}}|^{Q_{++\beta_1\beta_2}}\!\right)
\end{align*}
\begin{align*}
& \ \ \cdot \left(\frac{\partial}{\partial t_1}\right)^{\!l_1} \!\left(\frac{\partial}{\partial t_2}\right)^{\!l_2} \Bigg(\prod_{\substack{3 \le \alpha_1<\alpha_2 \le d+2\\ 3 \le \beta_1<\beta_2 \le d+2}}\Big|\Big(\frac{\partial}{\partial E_{BB}}\Big)_{\{\alpha_1\alpha_2\},\{\beta_1\beta_2\}}\Big|^{Q_{\alpha_1\alpha_2\beta_1\beta_2}}\Bigg)\\
& \ \ \cdot\, \frac{I_{G_{B}}(\delta+1, E_{BB})}{I_{G_B}\big(\delta+1, D_{BB}-D_{AB}^T D_{AA}^{-1} D_{AB}\big)} \Bigg|_{E_{BB}=D_{BB}-\sum_{j=1}^2 t_jD_{\{j\},B}^T D_{\{j\},B}} \; \Bigg|_{t_1=t_2=0}.
\end{align*}
\end{thm*}

\begin{proof}
Let $G_{AB}$ be the graph induced by the vertices $A\cup B$.  By Theorem~\ref{thm_int_D} and as in the proof of Corollary~\ref{cor_tree3}, the normalizing constants for $G$ and $G^e$ decompose into the normalizing constants for $G_{AB}$ and $G^e_{AB}$, respectively, and an integral over the variables involving $C$. Moreover, the integral over the variables involving $C$ is the same for $G$ and for $G^e$. Hence, 
\begin{eqnarray*}
\frac{I_{G}(\delta, D)}{I_{G^e}(\delta, D)} &=& \frac{I_{G_{AB}}(\delta, D_{\{A,B\},\{A,B\}})}{I_{G^e_{AB}}(\delta, D_{\{A,B\},\{A,B\}})},
\end{eqnarray*}
where $D_{\{A,B\},\{A,B\}}$ denotes the principle submatrix of $D$ corresponding to the rows and columns in $A\cup B$. Since $G_{AB}$ is of the form needed for Lemma~(S.2) and $G^e_{AB}$ is of the form needed for Lemma~(S.1), the claim follows by applying Lemma~(S.1) and Lemma~(S.2).  
\end{proof}

Note that since $G^e$ is chordal, the induced graph $G_B$ is also chordal. Hence its normalizing constant is given by (\ref{eq_chordal}). For the case in which $D$ is the identity matrix, $E_{BB} \equiv D_{BB}$ and hence for $l_1>0$ or $l_2>0$ we get
\begin{multline*}
\Big(\frac{\partial}{\partial t_1}\Big)^{l_1} \Big(\frac{\partial}{\partial t_2}\Big)^{l_2} \Bigg(\prod_{\substack{3 \le \alpha_1<\alpha_2 \le d+2\\ 3 \le \beta_1<\beta_2 \le d+2}}\Big|\Big(\frac{\partial}{\partial E_{BB}}\Big)_{\{\alpha_1\alpha_2\},\{\beta_1\beta_2\}}\Big|^{Q_{\alpha_1\alpha_2\beta_1\beta_2}}\Bigg) \\
\cdot\, \frac{I_{G_{B}}(\delta+1, E_{BB})}{I_{G_B}\big(\delta+1, D_{BB}-D_{AB}^T D_{AA}^{-1} D_{AB}\big)} \;=\; 0.
\end{multline*}
In addition, $|D_{A,\{\alpha_1,\alpha_2\}}| =  |D_{A,\{\beta_1,\beta_2\}}|= 0$. Hence, in the infinite sums only the terms for $q=0$ and $l=0$ are non-zero, and the infinite series reduce to 1. Since $|D_{AA}| = 1$, we see that if $D=\mathbb{I}_p$ then Theorem~(S.3) reduces to Theorem~\ref{thm_general}.

We revisit the graph $G_5$ discussed in Example~\ref{G5exampleI} and show how to apply Theorem~(S.3) to obtain the normalizing constant, $I_{G_5}(\delta, D)$, explicitly.

\begin{example*}[S.4]
\label{G5exampleD}
A minimal chordal cover of $G_5$ is given in Figure~\ref{fig_graph_5} (right). Only one edge is in the chordal cover of $G_5$ but not in $G_5$ itself, namely the edge $e=(1,3)$. We denote the chordal cover of $G_5$ by $G_5^e$. There are $d=3$ triangles formed by the edge $e$ in $G_5^e$. The vertices adjacent to the edge $e$ are $A=\{1,3\}$ and all remaining vertices span a triangle with the edge $e$, i.e.~$B=\{2,4,5\}$. The induced graph $G_B$ consists of one edge only, namely $(4,5)$, so its normalizing constant is
$$
I_{G_B}(\delta, D_{B}) \;=\; |D_{\{4,5\}}|^{-\left(\delta+\frac{3}{2}\right)}\;\Gamma_2\left(\delta+\tfrac{3}{2}\right)\,\Gamma(\delta+1),
$$
where, in order to abbreviate notation, we denoted by $D_{B}$ and $D_{\{4,5\}}$, respectively, the principle submatrix of $D$ corresponding to the rows and columns in $B$ and in $\{4,5\}$, respectively. Hence by applying Theorem~(S.3), we obtain the following formula for $I_{G_5}(\delta, D)$:
\begin{align*}
& \pi^{-1/2} \; \frac{\Gamma\left(\delta+\frac{5}{2}\right)}{\Gamma(\delta+3)} \;I_{G_5^e}(\delta, D)\;\; |D_{AA}|^{\delta+3} \;\sum_{q=0}^{\infty}\; \frac{\big(\delta+\frac{5}{2}\big)_{q}}{q!\;\big(\delta+3\big)_{2q}} \\
& \ \ \ \cdot \sum_{l=0}^{\infty} \frac{1}{l!\;\big(\delta+2q+3\big)_{l}}\,\; \sum_{l_1+l_2=l} \binom{l}{l_1} \prod_{i=1}^2 \left(\delta+q+\tfrac{5}{2}\right)_{l_i} \\
& \ \ \ \cdot \sum_{Q} \!\binom{q}{Q}\!\! \bigg(\!\prod_{\{\alpha_1,\alpha_2\}\subset\{2,4,5\}} \!\!\!\!\!\!\!|D_{A,\{\alpha_1,\alpha_2\}}|^{Q_{\alpha_1\alpha_2++}}\!\!\bigg)\!\! \bigg(\!\prod_{\{\beta_1,\beta_2\}\subset\{2,4,5\}} \!\!\!\!\!\!\! |D_{A,\{\beta_1,\beta_2\}}|^{Q_{++\beta_1\beta_2}}\!\!\bigg) \\
& \ \ \ \cdot \Big(\frac{\partial}{\partial t_1}\Big)^{l_1} \Big(\frac{\partial}{\partial t_2}\Big)^{l_2} \Bigg(\prod_{\substack{\{\alpha_1,\alpha_2\}\subset\{2,4,5\}\\ \{\beta_1,\beta_2\}\subset\{2,4,5\}}}\Big|\Big(\frac{\partial}{\partial E_{BB}}\Big)_{\{\alpha_1\alpha_2\},\{\beta_1\beta_2\}}\Big|^{Q_{\alpha_1\alpha_2\beta_1\beta_2}}\Bigg)\\
& \ \ \ \cdot\, \frac{|D_{\{4,5\}}-D_{A,\{4,5\}}^T D_{AA}^{-1} D_{A,\{4,5\}}|^{\delta+\frac{3}{2}}}{|E_{\{4,5\}}|^{\delta+\frac{3}{2}}} \Bigg|_{E_{BB}=D_{BB}-\sum_{j=1}^2 t_jD_{\{j\},B}^T D_{\{j\},B}} \; \Bigg|_{t_1=t_2=0}.
\end{align*}
Note that when $2\in\{\alpha_1, \alpha_2\}$ or $2\in\{\beta_1, \beta_2\}$ and $Q_{\alpha_1\alpha_2\beta_1\beta_2}>0$, then
$$
\Big|\Big(\frac{\partial}{\partial E_{BB}}\Big)_{\{\alpha_1\alpha_2\},\{\beta_1\beta_2\}}\Big|^{Q_{\alpha_1\alpha_2\beta_1\beta_2}} \, |E_{\{4,5\}}|^{-\left(\delta+\frac{3}{2}\right)} \;=\; 0.
$$
As a consequence, the normalizing constant for $I_{G_5}(\delta, D)$ is given by
\begin{align*}
& \ \ \pi^{-1/2} \; \frac{\Gamma\left(\delta+\frac{5}{2}\right)}{\Gamma\left(\delta+3\right)} \;I_{G_5^e}(\delta, D)\;\; |D_{AA}|^{\delta+3} \;\sum_{q=0}^{\infty}\; \frac{\big(\delta+\frac{5}{2}\big)_{q}}{q!\;\big(\delta+3\big)_{2q}} \; |D_{A,\{4,5\}}|^{2q} \nonumber \\
& \ \ \cdot \sum_{l=0}^{\infty} \frac{1}{l!\big(\delta+2q+3\big)_{l}}\,\; \sum_{l_1+l_2=l}\!\! \binom{l}{l_1} \prod_{i=1}^2 \left(\delta+q+\tfrac{5}{2}\right)_{l_i}  \left(\frac{\partial}{\partial t_1}\right)^{\!l_1} \!\left(\frac{\partial}{\partial t_2}\right)^{\!l_2} \Bigg|\frac{\partial}{\partial E_{\{4,5\}}}\Bigg|^{q} \nonumber \\
& \ \ \cdot \frac{|D_{\{4,5\}}-D_{A,\{4,5\}}^T D_{AA}^{-1} D_{A,\{4,5\}}|^{\delta+\frac{3}{2}}}{|E_{\{4,5\}}|^{\delta+\frac{3}{2}}}\Bigg|_{\substack{E_{\{4,5\}}=D_{\{4,5\}}-\sum_{j=1}^2 t_jD_{\{j\},\{4,5\}}^T D_{\{j\},\{4,5\}} \\ t_1=t_2=0}}, \nonumber
\end{align*}
the evaluation being done first at $E_{\{4,5\}}=D_{\{4,5\}}-\sum_{j=1}^2 t_jD_{\{j\},\{4,5\}}^T D_{\{j\},\{4,5\}}${\phantom{$a_{B_{C_{D_{E}}}}$}} and last at $t_1=t_2=0$. Since $G_5^e$ is chordal, the corresponding normalizing constant is obtained from (\ref{eq_chordal}): 
$$
I_{G_5^e}(\delta, D) = \Gamma_{3}(\delta+2)\;\Gamma_{4}\left(\delta+\tfrac{5}{2}\right)\; \frac{|D_{\{1,2,3\}}|^{-\left(\delta+2\right)}\,|D_{\{1,3,4,5\}}|^{-\left(\delta+\frac{5}{2}\right)}\,}{|D_{\{1,3\}}|^{-\left(\delta+\frac{3}{2}\right)}\Gamma_{2}\left(\delta+\frac{3}{2}\right)}.
$$
Note also that
$$
\Big|\frac{\partial}{\partial E_{\{4,5\}}}\Big|^{q}\;  |E_{\{4,5\}}|^{-\left(\delta+\frac{3}{2}\right)} \;=\;(-1)^{2q}\,\frac{\Gamma_2\left(\delta+\frac{3}{2}+q\right)}{\Gamma_2\left(\delta+\frac{3}{2}\right)}\, |E_{\{4,5\}}|^{-\left(\delta+\frac{3}{2}+q\right)};
$$
this can be obtained by writing $I_{\textrm{complete}}(\delta,D)$ as an integral in Equation~(\ref{Siegel}) and applying the differential operator $\partial/\partial E_{\{4,5\}}$ to both sides of the equation~\citet[p. 81]{Maass1971}. Hence, by collecting all terms, noting that $A=\{1,3\}$ and simplifying the formula for $I_{G_5}(\delta, D)$ above, we obtain the normalizing constant for $G_5$ for general $D$:
\begin{align}
\label{G_5_formula2}
I_{G_5}(\delta,D) &= I_{G_5}(\delta, \mathbb{I}_5) \nonumber \\
& \quad\cdot \frac{|D_{\{1,3\}}|^{2\delta+\frac{9}{2}}}{|D_{\{1,2,3\}}|^{\delta+2}\,|D_{\{1,3,4,5\}}|^{\delta+\frac{5}{2}}}\;\sum_{q=0}^{\infty}\; \frac{\big(\delta+\frac{5}{2}\big)_{q}\,\big(\delta+\frac{3}{2}\big)_{q}}{q!\;\big(\delta+3\big)_{2q}} \nonumber \nonumber \\
&\quad \cdot \; |D_{\{1,3\}\{4,5\}}|^{2q} \sum_{l=0}^{\infty} \frac{1}{l!\;(\delta+2q+3)_l}\,\; \sum_{l_1+l_2=l} \binom{l}{l_1} \prod_{i=1}^2 \left(\delta+q+\tfrac{5}{2}\right)_{l_i} \\
& \quad \cdot \Big(\!\frac{\partial}{\partial t_1}\!\Big)^{\!l_1} \Big(\!\frac{\partial}{\partial t_2}\!\Big)^{\!l_2}\frac{|D_{\{4,5\}}-D_{\{1,3\},\{4,5\}}^T D_{\{1,3\}}^{-1} D_{\{1,3\},\{4,5\}}|^{\delta+\frac{3}{2}}}{|D_{\{4,5\}}-\sum_{j=1}^2 t_jD_{\{j\},\{4,5\}}^T D_{\{j\},\{4,5\}}|^{\delta+q+\frac{3}{2}}}\Bigg|_{t_1=t_2=0}.\nonumber
\end{align}
If $D = \mathbb{I}_5$ then $D_{\{j\},\{4,5\}}^T D_{\{j\},\{4,5\}}=0$ and hence for $l_1>0$ or $l_2>0$ we get
$$
\Big(\frac{\partial}{\partial t_1}\Big)^{l_1} \Big(\frac{\partial}{\partial t_2}\Big)^{l_2}\;\frac{\big|D_{\{4,5\}}-D_{\{1,3\},\{4,5\}}^T D_{\{1,3\}}^{-1} D_{\{1,3\},\{4,5\}}\big|^{\delta+\frac{3}{2}}}{\big|D_{\{4,5\}}-\sum_{j=1}^2 t_jD_{\{j\},\{4,5\}}^T D_{\{j\},\{4,5\}}\big|^{\delta+q+\frac{3}{2}}} \;=\; 0.
$$
In addition, $|D_{\{1,3\},\{4,5\}}| = 0$. Hence in the infinite sums only the terms for $q=0$ and $l=0$ are non-zero, and the infinite sums reduce to 1. Since $|D_{\{1,3\}}| = |D_{\{1,2,3\}}| = |D_{\{1,3,4,5\}}|=1$, we see that the formula for $I_{G_5}(\delta,D)$ indeed reduces to $I_{G_5}(\delta, \mathbb{I}_5)$. 
\end{example*}

\end{document}